\let\oriLarge=\Large
\renewcommand\LARGE{\bfseries\oriLarge}
\renewcommand\Large{\addvspace{2\baselineskip}\centering}
\renewcommand\large{\addvspace{2\baselineskip}\centering}
\theoremstyle{plain}
\newtheorem{thm}{Theorem}[section]
\newtheorem{theorem}[thm]{Theorem}
\newtheorem{lemma}[thm]{Lemma}
\newtheorem{corollary}[thm]{Corollary}
\theoremstyle{definition}
\newtheorem{definition}[thm]{Definition}
\newtheorem{remark}[thm]{Remark}
\newtheorem{question}[thm]{Question}
\newtheorem{problem}[thm]{Problem}
\newtheorem{fact}[thm]{Fact}
\newtheorem{thevarthm}[thm]{\varthmname}
\newenvironment{varthm*}[1]{\trivlist\item[]{\bf #1.}\it}{\endtrivlist}
\renewcommand\ge{\geqslant}
\renewcommand\geq{\geqslant}
\renewcommand\le{\leqslant}
\renewcommand\leq{\leqslant}
\newcommand\longto{\longrightarrow}
\let\tilde=\widetilde
\newcommand\numequiv{\equiv_{\rm num}}
\newcommand\dsupseteq{{\displaystyle\supseteq}}
\newcommand\be{\begin{eqnarray*}}
\newcommand\ee{\end{eqnarray*}}
\newcommand\compact{\itemsep=0cm \parskip=0cm}
\newcommand{\dl}{\Delta_{Y_\bullet}(L)}
\newcommand{\inteps}{[0,\epsilon_C(L)]}
\newcommand\Q{\mathbb Q}
\newcommand\R{\mathbb R}
\newcommand\C{\mathbb C}
\newcommand\Z{\mathbb Z}
\newcommand\N{\mathbb N}
\newcommand\F{\mathbb F}
\newcommand\T{\mathbb T}
\newcommand\K{\mathbb K}
\renewcommand\P{\mathbb P}
\newcommand\FH{\mathbb H}
\newcommand\omegalogc{\Omega^1_X(\log C)}
\newcommand\legendre[2]{\left(\frac{#1}{#2}\right)}
\newcommand\calo{{\mathcal O}}
\newcommand\cali{{\mathcal I}}
\newcommand\calj{{\mathcal J}}
\newcommand\calk{{\mathcal K}}
\newcommand\calf{{\mathcal F}}
\newcommand\call{{\mathcal L}}
\newcommand\calm{{\mathcal M}}
\newcommand\frakm{{\mathfrak m}}
\newcommand\insn{I_{n,s,n}}
\newcommand\icsn{I_{c,s,n}}
\newcommand\icsc{I_{c,s,c}}
\newcommand\vnsn{V_{n,s,n}}
\newcommand\vcsn{V_{c,s,n}}
\newcommand\vcsc{V_{c,s,c}}
\def\field{\C}
\newcommand\newop[2]{\def#1{\mathop{\rm #2}\nolimits}}
\newop\log{log}
\newop\ord{ord}
\newop\Gal{Gal}
\newop\SL{SL}
\newop\GL{GL}
\newop\Bl{Bl}
\newop\mult{mult}
\newop\mass{mass}
\newop\div{div}
\newop\codim{codim}
\newop\sing{sing}
\newop\vdim{vdim}
\newop\edim{edim}
\newop\Ass{Ass}
\newop\size{size}
\newop\reg{reg}
\newop\areg{areg}
\newop\asreg{asreg}
\newop\satdeg{satdeg}
\newop\supp{supp}
\newop\gin{gin}
\newop\ini{in}
\newop\vol{vol}
\newop\sat{sat}
\newop\length{length}
\newop\depth{depth}
\newop\characteristic{char}
\newcommand\eqnref[1]{(\ref{#1})}
\newcommand{\olc}{\ensuremath{\Omega_X^1(\log C)}}
\newcommand\marginnote[1]{%
   {{\small\upshape\sffamily $\langle$...#1...$\rangle$}}
   \marginpar{$\leftarrow$\raggedright\scriptsize\sffamily #1}}
\newcommand\meta[1]{{\small\upshape\sffamily$\langle\!\langle\,$%
\marginpar[\hfill\raisebox{-0.75ex}{\Huge$\rightarrow$}]{\raisebox{-0.75ex}{\Huge$\leftarrow$}}%
   #1$\,\rangle\!\rangle$}}
\newcommand\ibul{I_{\bullet}}
\newcommand\gbul{G_{\bullet}}
\newcommand\fbul{\calf_{\bullet}}
\newcommand\ebul{E_{\bullet}}
\newcommand\vbul{V_{\bullet}}
\newcommand\ybul{Y_{\bullet}}
\newcommand\emin{e_{\min}}
\newcommand\emax{e_{\max}}
\newcommand\nuy{\nu_{\ybul}}
\newcommand\phifbul{\varphi_{\fbul}}
\newcommand\wtilde[1]{\widetilde{#1}}
\newcommand\restr[1]{\big|_{#1}}
\newcommand\subm{\underline{m}}
\newcommand\subn{\underline{n}}
\newcommand\one{\mathbbm{1}}
\newcommand\ldk{(\P^2,\calo_{\P^2}(d)\otimes\cali(kZ))}
\newcommand\ldmi{(\P^2,\calo_{\P^2}(d)\otimes\cali(\subm Z))}
\newcommand\ldthreem{(\P^2,\calo_{\P^2}(\widetilde{d}-3)\otimes\cali((\widetilde{\subm}-\one)Z))}
\newcommand\ldmij{(\P^2,\calo_{\P^2}(d-1)\otimes\cali((\subm-\one)Z))}
\newcommand\alphasubmn{\alpha_{\subm,\subn}}
\newcommand\mj{m^{(j)}}
\newcommand\nj{n^{(j)}}
\newcommand\mji{\mj_i}
\newcommand\submj{\subm^{(j)}}
\newcommand\subnj{\subn^{(j)}}
\newcommand\rru{\rule{1cm}{0cm}}
\newcommand\eps{\varepsilon}
\newcommand\lra{\longrightarrow}
\def\keywordname{{\bfseries Keywords}}%
\def\keywords#1{\par\addvspace\medskipamount{\rightskip=0pt plus1cm
\def\and{\ifhmode\unskip\nobreak\fi\ $\cdot$
}\noindent\keywordname\enspace\ignorespaces#1\par}}
\def\subclassname{{\bfseries Mathematics Subject Classification
(2000)}\enspace}
\def\subclass#1{\par\addvspace\medskipamount{\rightskip=0pt plus1cm
\def\and{\ifhmode\unskip\nobreak\fi\ $\cdot$
}\noindent\subclassname\ignorespaces#1\par}}
\def\sys{\mathcal{L}}
\newcommand\rounddown[1]{\left\lfloor#1\right\rfloor}
\definecolor{qqqqff}{rgb}{0,0,0}
\definecolor{uuuuuu}{rgb}{0,0,0}
\definecolor{zzttqq}{rgb}{0,0,0}
\definecolor{xdxdff}{rgb}{0,0,0}
\definecolor{ttqqqq}{rgb}{0.2,0.,0.}
\definecolor{uuuuuu}{rgb}{0.26666666666666666,0.26666666666666666,0.26666666666666666}
\definecolor{xdxdff}{rgb}{0.49019607843137253,0.49019607843137253,1.}
\definecolor{wwccqq}{rgb}{0.4,0.8,0.}
\definecolor{qqqqcc}{rgb}{0.,0.,0.8}
\definecolor{ffttww}{rgb}{1.,0.2,0.4}
\begin{document}

\author{M.~Lampa-Baczy\'nska, J.~Szpond\footnote{JS was partially supported by National Science Centre, Poland, grant 2014/15/B/ST1/02197}}
\title{From Pappus Theorem to parameter spaces of some extremal line point configurations and applications}
\date{\today}
\maketitle
\thispagestyle{empty}

\begin{abstract}

\keywords{arrangements of lines, combinatorial arrangements, Pappus Theorem}
\subclass{52C35, 32S22, 14N20, 13F20}
   In the present work we study parameter spaces of two line point configurations
   introduced by B\"or\"oczky. These configurations are extremal from the point
   of view of Dirac-Motzkin Conjecture settled recently by Green and Tao \cite{GreTao13}.
   They have appeared also recently in commutative algebra in connection with
   the containment problem for symbolic and ordinary powers of homogeneous
   ideals \cite{DST13} and in algebraic geometry in considerations revolving
   around the Bounded Negativity Conjecture \cite{BNC}. Our main results are
   Theorem A and Theorem B. We show that the parameter space of what we call $B12$
   configurations is a three dimensional rational variety. As a consequence we
   derive the existence of a three dimensional family of rational $B12$ configurations.
   On the other hand the parameter space of $B15$ configurations is shown to be
   an elliptic curve with only finitely many rational points, all corresponding
   to degenerate configurations. Thus, somewhat surprisingly, we conclude that there are no
   rational $B15$ configurations.
\end{abstract}

%*****************************************************************************

\section{Introduction}
\label{intro}
   Line point configurations are a classical subject of study in geometry.
   One of motivations to study configurations of real lines is the following result.
\begin{varthm*}{Theorem}
   {\rm (Sylvester-Gallai)} Let $L_1,\ldots,L_d$ be finitely many mutually distinct lines
   in the real projective plane. Then
\begin{itemize}
   \item[a)] either all lines intersect in one point (the lines belong to one pencil);
   \item[b)] or there exists a point where exactly two of the given lines intersect.
\end{itemize}
\end{varthm*}
\begin{definition}[Ordinary point]
   A point where exactly two lines from a given configuration of lines $L_1,\ldots,L_d$
   meet is called an \emph{ordinary point} of this configuration.
\end{definition}
   The Diract Conjecture solved recently by Green and Tao in \cite[Theorem 1.2]{GreTao13}
   predicted that the number of ordinary points is bounded from below by $\lfloor d/2\rfloor$.
   A series of examples \cite[Proposition 2.1]{GreTao13} due essentially to B\"or\"oczky,
   shows that this lower bound cannot be improved in general. We denote by $B\mathrm{d}$
   the B\"or\"oczky example for $d$ lines. Thus e.g. $B12$ is a configuration
   of B\"or\"oczky containing $12$ lines.

   Recently B\"or\"oczky configurations have appeared in algebraic geometry in connection
   with the following problem due to Huneke
\begin{question}[Huneke]\label{que:Huneke}
   Let $I$ be the ideal of finitely many points in $\P^2(\K)$. Is then
   $$I^{(3)}\subset I^2?$$
\end{question}
   See \cite{SzeSzp16} for the notation, background, history and more results.
   A first counterexample to Question \ref{que:Huneke} has been announced
   by Dumnicki, Szemberg and Tutaj-Gasi\'nska in \cite{DST13}.
   The points in this counterexample arise as all intersection points
   of a certain configuration of complex lines. Shortly after \cite{DST13}
   it has been discovered in \cite{CGMLLPS2015} that intersection points of B\"or\"oczky
   configurations provide a counterexample in $\R[x_0,x_1,x_2]$.
   Harbourne and Seceleanu \cite{HarSec15} have discovered series
   of counterexamples over arbitrary finite fields.
   Finally it has been noticed in \cite[Figure 1]{DHNSST2015} that
   B\"or\"oczky construction for $12$ lines can be carried out over the field $\Q$.
   It has been expected that all B\"or\"oczky examples have a rational realization.
   In the present note we study parameter spaces for configurations $B12$ and $B15$.
   The main results are the following two theorems.
\begin{varthm*}{Theorem A}
   All $B12$ configurations form a $3$ dimensional family parametrized
   by a rational variety (an open set in $(\P^1(\R))^3$).
\end{varthm*}
\begin{varthm*}{Theorem B}
   All $B15$ configurations form a $1$ dimensional family parametrized by an elliptic curve.
\end{varthm*}
   Theorem B has the following, somewhat surprising, consequence.
\begin{corollary}\label{cor:corollary B}
   There is no $B15$ configuration over the rational numbers.
\end{corollary}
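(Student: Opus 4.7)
The plan is to derive the corollary as a direct consequence of Theorem B combined with a computation of the Mordell--Weil group of the parameter elliptic curve.

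First, I would unpack what a \emph{rational} $B15$ configuration means: a configuration whose lines (equivalently, all intersection points) can be defined by equations with coefficients in $\Q$. Since the moduli space of $B15$ configurations is an elliptic curve $E$ (Theorem B), and since the parametrization is defined over $\Q$ (this should be visible from the defining equations produced while proving Theorem B, built from incidence conditions with rational coefficients), the rational $B15$ configurations correspond bijectively to the rational points of $E$, minus those parameter values that produce a degeneration (coincident lines, collapsed intersection points, or extra unwanted incidences).

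The second step is to determine $E(\Q)$ explicitly. Using the Weierstrass model for $E$ coming from Theorem B, I would apply the standard toolkit: the Nagell--Lutz theorem (or reduction modulo several small primes of good reduction) pins down the torsion subgroup, and a $2$-descent bounds the rank from above. The expected outcome, consistent with the statement in the abstract that $E$ has only finitely many rational points, is that the rank is $0$ and $E(\Q) = E(\Q)_{\text{tors}}$ is a small, explicit finite group. In practice this step can be done by a short computation in a computer algebra system; the substance is simply having the correct Weierstrass equation in hand.

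The third and most delicate step is the geometric interpretation: for each of the finitely many rational points of $E$, I would plug the parameter value back into the parametrization of the line configuration and verify that the resulting arrangement is degenerate, i.e.\ it fails at least one of the combinatorial/incidence requirements that define a genuine $B15$ (two lines coincide, two nominally distinct triple points merge, or some required incidence is not transverse). This amounts to checking finitely many explicit configurations one by one. I expect this to be the main obstacle: not because any individual check is hard, but because one must identify the precise rational points (including, typically, the ``points at infinity'' of the parameter space and the $2$-torsion points) and give a uniform description of why each corresponds to a degeneration rather than an honest $B15$. Once every rational point of $E$ is accounted for as degenerate, no rational $B15$ configuration exists, which is the corollary.
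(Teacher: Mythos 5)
Your proposal follows essentially the same route as the paper: Theorem B identifies the parameter space with the elliptic curve $E: Y^2+XY+Y=X^3+X^2$, one observes that $E(\Q)$ consists of only finitely many (in fact four) rational points — the paper simply cites Cremona's tables where you propose a Nagell--Lutz/descent or CAS computation — and one checks that each such point corresponds to forbidden (degenerate) values of the parameters $a,b$, so no rational $B15$ configuration exists.
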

   As a consequence we conclude that (so far) there is just one rational
   counterexample to Huneke Question \ref{que:Huneke}.
   This is quite striking and justifies the following question.
\begin{problem}
   Construct more rational counterexamples to the containment problem.
\end{problem}

\section{A constructions of $12$ lines with $19$ triple points}

\subsection{Geometric construction}
   In this section we provide a direct geometric construction of B\"or\"oczky configuration
   of $12$ lines, which works in a projective plane defined over an arbitrary field with
   sufficiently many elements.

   The main auxiliary results which come into the argument are the following.
\begin{theorem}[Pappus Theorem]\label{thm: perspective}
If triangles $ABC$ and $DEF$ are perspective in two ways with
perspective centers $P$, $Q$, then there is also a third perspective
center $R$ (see Figure \ref{fig: perspective}).

\begin{figure}[h]
\centering
\begin{tikzpicture}[line cap=round,line join=round,>=triangle 45,x=0.6cm,y=0.6cm]
\clip(-2,-6) rectangle (6.7,1.6);
\draw [color=black,domain=-2:6.7] plot(\x,{(--8.35-4.16*\x)/0.15});
\draw [color=black,domain=-2:6.7] plot(\x,{(--0.08--3.75*\x)/-4.28});
\draw [color=black,domain=-2:6.7] plot(\x,{(--7.02--2.63*\x)/-4.24});
\draw [color=black,domain=-2:6.7] plot(\x,{(-16.47--2.48*\x)/3.79});
\draw [color=black,domain=-2:6.7] plot(\x,{(--8.29-3.7*\x)/0.86});
\draw [color=black,domain=-2:6.7] plot(\x,{(--13.22--1.65*\x)/-4.24});
\draw [color=black,domain=-2:6.7] plot(\x,{(--8.43-4.78*\x)/-0.79});
\draw [color=black,domain=-2:6.7] plot(\x,{(-11.56--2.95*\x)/3.03});
\draw [color=black,domain=-2:6.7] plot(\x,{(-22.7--1.88*\x)/4.75});
\begin{scriptsize}
\fill [color=black] (1.96,1.2) circle (1.5pt);
\draw[color=black] (2.25,1.3) node {$R$};
\fill [color=black] (6.35,-5.59) circle (1.5pt);
\draw[color=black] (6.53,-5.3) node {$Q$};
\fill [color=black] (2.08,-1.84) circle (1.5pt);
\draw[color=black] (2.3,-1.3) node {$F$};
\fill [color=black] (-1.68,-5.44) circle (1.5pt);
\draw[color=black] (-1.83,-5.14) node {$P$};
\fill [color=black] (2.83,-2.5) circle (1.5pt);
\draw[color=black] (3.01,-2.1) node {$C$};
\fill [color=black] (3.07,-3.56) circle (1.5pt);
\draw[color=black] (3.18,-3.25) node {$E$};
\fill [color=black] (2.11,-3.94) circle (1.5pt);
\draw[color=black] (1.8,-3.5) node {$A$};
\fill [color=black] (1.18,-3.58) circle (1.5pt);
\draw[color=black] (1.03,-3.1) node {$D$};
\fill [color=black] (1.36,-2.5) circle (1.5pt);
\draw[color=black] (1.22,-2.1) node {$B$};
\end{scriptsize}
\end{tikzpicture}
\caption{}
\label{fig: perspective}
\end{figure}

\end{theorem}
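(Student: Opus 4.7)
The plan is to prove this form of Pappus' theorem by a direct computation in projective coordinates, exploiting the fact that four points in general position determine a projective frame. Applying a projective transformation, I may place $A = (1{:}0{:}0)$, $B = (0{:}1{:}0)$, $C = (0{:}0{:}1)$, and the first perspective center $P = (1{:}1{:}1)$. Reading the correspondence from the figure, the three lines through $P$ are $AE$, $BF$, $CD$, so $E$ lies on $AP$, $F$ on $BP$, and $D$ on $CP$. Parametrizing these three lines gives
\[
D = (d_0{:}d_0{:}d_1),\quad E = (e_0{:}e_1{:}e_1),\quad F = (f_0{:}f_1{:}f_0),
\]
which together depend on three scale-invariant parameters.

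Next, I encode the hypothesis that the triangles are perspective from a second center $Q$ by requiring the lines $AD$, $BE$, $CF$ to be concurrent. Computing these line equations via cross products yields $d_0 z = d_1 y$, $e_1 x = e_0 z$, and $f_0 y = f_1 x$; intersecting any two of them and substituting into the third reduces the concurrency to the single polynomial relation
\[
d_0\, e_1\, f_0 \;=\; d_1\, e_0\, f_1.
\]
I then repeat the same calculation for the would-be third perspective center $R$, whose correspondence produces the lines $AF$, $BD$, $CE$ with equations $f_1 z = f_0 y$, $d_1 x = d_0 z$, $e_0 y = e_1 x$. The concurrency condition for this triple simplifies to exactly the same relation $d_0 e_1 f_0 = d_1 e_0 f_1$. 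Hence the existence of the second perspectivity $Q$ automatically forces the existence of the third one at $R$, and this point $R$ can be read off explicitly from the coordinate formulas.

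The only step that requires any care is the initial normalization, which presupposes $A, B, C, P$ in general position, and the implicit assumption that the nine points of the resulting Pappus configuration are pairwise distinct. These are open conditions satisfied in the generic picture drawn in Figure \ref{fig: perspective}, and the few degenerate cases (collinearities among $A,B,C,P$, or coincidences such as $D = A$) can be either inspected directly or obtained by specializing from the generic configuration. The conceptual punchline is just that the three concurrency conditions corresponding to the three possible vertex correspondences between $\{A,B,C\}$ and $\{D,E,F\}$ reduce, in these coordinates, to the same symmetric monomial equation, so any two of the three concurrencies imply the third.
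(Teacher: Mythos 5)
Your argument is correct, and the computation checks out: with $A=(1{:}0{:}0)$, $B=(0{:}1{:}0)$, $C=(0{:}0{:}1)$, $P=(1{:}1{:}1)$ and $D=(d_0{:}d_0{:}d_1)\in CP$, $E=(e_0{:}e_1{:}e_1)\in AP$, $F=(f_0{:}f_1{:}f_0)\in BP$, the lines $AD$, $BE$, $CF$ have coefficient vectors $(0,-d_1,d_0)$, $(e_1,0,-e_0)$, $(-f_1,f_0,0)$, the lines $AF$, $BD$, $CE$ have coefficient vectors $(0,-f_0,f_1)$, $(d_1,0,-d_0)$, $(-e_1,e_0,0)$, and both $3\times 3$ determinants equal $\pm(d_0e_1f_0-d_1e_0f_1)$, so concurrency at $Q$ is indeed equivalent to concurrency at $R$ once the perspectivity at $P$ is built into the parametrization. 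Note, however, that the paper gives no proof of Theorem \ref{thm: perspective} at all: it is quoted as the classical Pappus Theorem (in the equivalent form that two triangles perspective in two ways are perspective in three ways) and used as a black box in Lemmas \ref{lem: collinear1} and \ref{lem: collinear2}, so your coordinate verification is not an alternative to an argument of record but a self-contained substitute, very much in the spirit of the algebraic computations of Section 2.2; what it buys is a proof valid over an arbitrary field and an explicit formula for $R$. Two points should be made explicit rather than left to the figure. First, you tacitly assume that the two given perspectivities correspond to two of the three cyclic matchings between $\{A,B,C\}$ and $\{D,E,F\}$ (this is the intended reading of ``perspective in two ways'', and $R$ is the center for the remaining cyclic matching); state that hypothesis. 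Second, the normalization requires $A$, $B$, $C$, $P$ in general position and $AF$, $BD$, $CE$ to be genuine lines, and your remark that degenerate cases can be ``obtained by specializing from the generic configuration'' is slightly loose, since the hypothesis consists of closed conditions and a degenerate doubly perspective pair need not arise as a limit of nondegenerate ones; these cases are excluded by the nondegeneracy implicit in the statement and in every application in the paper, so the issue is cosmetic, but it should be acknowledged rather than glossed over.
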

  The following version of Bezout's Theorem is taken from \cite{BS2015}.
\begin{theorem}\label{thm: Pascal}
If two projective curves $C$ and $D$ in $\mathbb P^2$ of degree $n$ intersect at
exactly $n^2$ points and if $n\cdot m$ of these points lie on irreducible curve $E$ of degree
$m < n$, then the remaining $n\cdot (n - m)$ points lie on curve of degree at most $n - m$.
\end{theorem}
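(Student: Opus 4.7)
\noindent\emph{Proof proposal.} The plan is to run the classical ``$AF+BG$'' pencil argument. Let $F$, $G$, $H$ denote defining forms for $C$, $D$, $E$ of degrees $n$, $n$, $m$ respectively, and write $S := C\cap D\cap E$ for the $nm$ common points. The aim is to produce a form $R$ of degree $n-m$ vanishing on $(C\cap D)\setminus S$.

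The crux will be to choose an auxiliary point $p\in E\setminus S$. Since $E$ is a one-dimensional variety it contains infinitely many points, so such a $p$ exists over any base field large enough for our applications. The equation $\lambda F(p)+\mu G(p)=0$ then imposes a single linear condition on $(\lambda:\mu)\in\P^1$ and singles out a curve $P:=\lambda_0 F+\mu_0 G$ in the pencil spanned by $F$ and $G$. By construction $P$ has degree $n$ and vanishes at each of the $n^2+1$ points of $(C\cap D)\cup\{p\}$.

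Next I would inspect $P\cap E$: it contains $S\cup\{p\}$, so has at least $nm+1$ points, whereas $\deg P\cdot\deg E=nm$. By Bezout's theorem $P$ and $E$ must share a component, and irreducibility of $E$ then forces $H\mid P$; writing $P=HR$ yields a form $R$ of degree $n-m$. To finish, each point of $(C\cap D)\setminus S$ is a zero of $P=HR$ but not of $H$, and so must be a zero of $R$; hence all $n(n-m)$ remaining points lie on the degree-$(n-m)$ curve $\{R=0\}$. The only delicate ingredient is the existence of the auxiliary point $p$ on $E$; over $\R$ or $\C$, the setting of this paper, it is automatic.
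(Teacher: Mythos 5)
The paper never proves this statement: it is quoted as a known version of B\'ezout's theorem from \cite{BS2015} and used only as a tool, so there is no in-paper argument to compare yours against. Judged on its own, your pencil (``$AF+BG$'') argument is the standard proof of this classical Cayley--Bacharach type fact and it is essentially complete: since $C$ and $D$ meet in finitely many points, $F$ and $G$ are linearly independent, so $P=\lambda_0F+\mu_0G$ is a nonzero degree-$n$ form vanishing on $(C\cap D)\cup\{p\}$; then $P$ and $E$ have at least $nm+1$ common points, the B\'ezout inequality forces a common component, irreducibility of $E$ gives $H\mid P$, and the cofactor $R=P/H$ of degree $n-m$ vanishes at the remaining $n(n-m)$ points because they do not lie on $E$. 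The one step stated too glibly is the existence of the auxiliary point $p\in E\setminus S$: over $\R$ (the field relevant for this paper) an irreducible curve need not have infinitely many real points, so ``$E$ is one-dimensional, hence infinite'' is not literally valid. The gap is easy to close under the hypotheses, though: any isolated point of $E(\R)$ is a singular point of the curve $E$, a reduced curve of degree $m$ has at most $m(m-1)/2$ singular points, and $m(m-1)/2<nm$ because $n>m$; since $E$ already carries $nm$ of the intersection points, it must have a smooth real point and hence infinitely many real points, so $p$ exists (in the paper's applications $E$ is anyway a line or a conic through several configuration points). With that remark added, your proof is correct.
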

   Now we are in the position to run our geometric construction.

   To begin with let $A$, $B$ and $C$ be non-collinear points in $\mathbb P^2$. Then choose three points $D$, $E$, $F$ on the lines $AC$, $AB$ and $BC$,
respectively, different from the points $A$, $B$, $C$. The points $D$, $E$, $F$ will be parameters of our construction. Then we define consecutively the following
$6$ points.
 \begin{equation}\label{eq:points1}
   \begin{array}{ccc}
    G & = & AF \cap BD,\\
    H & = & BD \cap EF,\\
    I & = & BC \cap ED,\\
    J & = & AF \cap ED,\\
    K & = & AC \cap EF,\\
    L & = & HJ \cap IK.
   \end{array}
   \end{equation}
At this stage an additional incidence comes into the picture.
\begin{lemma}\label{lem: collinear1}
The line $AB$ passes through the point $L$.
\end{lemma}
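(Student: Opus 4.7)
The plan is to invoke Pappus's Theorem (Theorem 1.1) directly, applied to two carefully chosen concurrent lines. The key observation is that the two lines $ED$ and $EF$, both passing through $E$, each carry three of the constructed points besides $E$ itself. On $ED$ we have $D$, together with $I = BC\cap ED$ and $J = AF\cap ED$. On $EF$ we have $F$, together with $H = BD\cap EF$ and $K = AC\cap EF$. These two triples of collinear points are exactly the input that Pappus requires.

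Applying Pappus to the triples $(I,J,D)$ on $ED$ and $(H,K,F)$ on $EF$, the three ``cross'' intersection points
\[
IK\cap JH,\qquad IF\cap DH,\qquad JF\cap DK
\]
are collinear. The first of these is $L$, by its very definition. For the second: $I$ and $F$ both lie on $BC$, so $IF=BC$, while $H$ and $D$ both lie on $BD$, so $DH=BD$; hence $IF\cap DH=B$. For the third: $J$ and $F$ both lie on $AF$, so $JF=AF$, while $K$ and $D$ both lie on $AC$, so $DK=AC$; hence $JF\cap DK=A$. Pappus then tells us that $L$, $B$, $A$ are collinear, which is precisely the claim $L\in AB$.

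The argument is essentially instantaneous once the right Pappus configuration is spotted, so the only real step is the recognition that the two pencil lines $ED$ and $EF$ through $E$, together with the four ``triangle'' lines $BC$, $AF$, $BD$, $AC$, naturally assemble into a Pappus hexagon with the desired conclusion. The usual non-degeneracy hypotheses — that $ED$ and $EF$ are distinct lines, and that each carries three distinct points different from the common point $E$ — hold automatically whenever the parameters $D$, $E$, $F$ are chosen in sufficiently general position on their respective sides of the triangle $ABC$, which is the standing assumption of the construction. I do not foresee any genuine obstacle beyond this bookkeeping.
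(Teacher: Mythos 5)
Your proof is correct: the triples $(I,J,D)$ on $ED$ and $(H,K,F)$ on $EF$ do form a Pappus hexagon, and the three cross-joins are exactly $L=IK\cap JH$, $B=IF\cap DH$ and $A=JF\cap DK$ as you compute, so Pappus gives $L\in AB$. In substance this is the same nine-point Pappus configuration the paper uses, but read differently: the paper's Theorem~\ref{thm: perspective} is stated not in the classical two-transversal (hexagon) form you invoke, but as the statement that two triangles perspective in two ways admit a third perspective center, and the paper applies it to the triangles $BJK$ and $AHI$, which are perspective from $D$ (via $BH$, $JI$, $KA$) and from $F$ (via $BI$, $JA$, $KH$), the third center being $L=HJ\cap IK$ on $AB$. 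So if you intend to cite the paper's theorem literally, you should either repackage your hexagon as this double perspectivity or say explicitly that you are using the classical formulation of Pappus, which is equivalent to the stated one; as written, the statement you apply is not verbatim the one the paper provides. Either way the argument is sound: your reading has the advantage of making the role of the two pencil lines through $E$ transparent, while the paper's reading matches the exact wording of its auxiliary theorem, which it then reuses in the same form for the later collinearity lemmas.
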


\begin{proof}
The incidences coming from the construction so far are indicated in Figure \ref{fig: point L}.
\begin{figure}[H]
\centering
\begin{tikzpicture}[line cap=round,line join=round,>=triangle 45,x=1.0cm,y=1.0cm,scale=0.7]
%\clip(-6.233333333333335,-2.9) rectangle (14.726666666666665,9.08);
\clip(-3.233333333333335,-2.3) rectangle (9.726666666666665,9.08);
\draw [domain=-2.5:14.726666666666665] plot(\x,{(--9.541866666666666-1.54*\x)/3.26});%prosta DCKAN
\draw [domain=2.5:3.5] plot(\x,{(--5.9436-2.16*\x)/-0.02}); %prosta BEATSL
\draw [domain=-6.233333333333335:9.4] plot(\x,{(-3.4741333333333326--0.62*\x)/3.28}); %prosta FOBIC
\draw [domain=-6.233333333333335:7.5] plot(\x,{(--0.4227038325134407-3.1112790298592836*\x)/-5.052572932158789});
%prosta FGJAP
\draw [domain=-6.233333333333335:8.75] plot(\x,{(-3.5117772595537353--1.751710360158245*\x)/5.039984333365261});
%prosta FQHEK
\draw [domain=-6.233333333333335:14.726666666666665] plot(\x,{(-1.5778755345053685-0.5625375291001036*\x)/5.783293730432688});
%prosta DMBHG
\draw [domain=-6.233333333333335:14.726666666666665] plot(\x,{(--5.257943361515681-1.3629688593990654*\x)/5.775882329226216});
%prosta DRIEJ
\draw [domain=-6.233333333333335:14.726666666666665] plot(\x,{(--1.0317160740424687-1.0004833741555315*\x)/-0.21392961431237845});
%prosta LNJHO
\draw [domain=-6.233333333333335:14.726666666666665] plot(\x,{(--4.698918506243937-1.0224659349540723*\x)/0.21471685702482635});
%prosta LPKIM
%\draw [domain=-6.233333333333335:5.2] plot(\x,{(-0.21610909610215334-2.4630053806498506*\x)/-1.7703517483201314});
%prosta SNGQ
%\draw [domain=0:14.726666666666665] plot(\x,{(--14.32886322749831-2.3518024770562342*\x)/1.9416704054925944});
%prosta SPCR
%\draw [domain=-6.233333333333335:14.726666666666665] plot(\x,{(-6.944919157563774--0.3144529324436316*\x)/7.418083235349504});
%prosta QOMR
%\draw [domain=-6.233333333333335:6] plot(\x,{(--2.8210648374795815--3.7072566054013603*\x)/3.790958064452835});
%prosta FNT
%\draw [domain=-1:14.726666666666665] plot(\x,{(-25.247040507035813--3.534340006156338*\x)/-4.444964135925282});
%prosta DPT
\begin{scriptsize}
\draw [fill=black] (2.7666666666666653,1.62) circle (1.5pt); %punkt A
\draw[color=black] (3.166666666666665,1.65) node {$A$};
\draw [fill=black] (2.7466666666666653,-0.54) circle (1.5pt); %punkt B
\draw[color=black] (2.8866666666666654,-0.3) node {$B$};
%\draw [fill=black] (6.0266666666666655,0.08) circle (1.5pt); %punkt C
%\draw[color=black] (6.166666666666665,0.36) node {$C$};
\draw [fill=black] (8.529960397099353,-1.1025375291001036) circle (1.5pt); %punkt D
\draw[color=black] (8.666666666666666,-0.92) node {$D$};
%\draw [fill=black] (2.754078067873137,0.2604313302989618) circle (1.5pt); %punkt E
%\draw[color=black] (2.8866666666666654,0) node {$E$};
\draw [fill=black] (-2.285906265492124,-1.4912790298592833) circle (1.5pt); %punkt F
\draw[color=black] (-2.373333333333335,-1.29) node {$F$};
%\draw [fill=black] (-0.2652999493594206,-0.24702780510777403) circle (1.5pt); %punkt G
%\draw[color=black] (-0.35333333333333486,0.06) node {$G$};
\draw [fill=black] (0.953056289709474,-0.3655365892428371) circle (1.5pt); %punkt H
\draw[color=black] (0.8066666666666651,-0.12) node {$H$};
\draw [fill=black] (4.634148432252167,-0.18321991016371605) circle (1.5pt); %punkt I
\draw[color=black] (4.766666666666665,0.1) node {$I$};
\draw [fill=black] (1.1669859040218524,0.6349467849126943) circle (1.5pt); %punkt J
\draw[color=black] (1.3666666666666651,0.97) node {$J$};
\draw [fill=black] (4.419431575227341,0.8392460247903564) circle (1.5pt); %punkt K
\draw[color=black] (4.306666666666665,0.55) node {$K$};
\draw [fill=black] (2.8295388752500106,8.410198527001244) circle (1.5pt); %punkt L
\draw[color=black] (3.046666666666665,8.42) node {$L$};
%\draw [fill=black] (4.74999280834317,-0.7348623380116698) circle (1.5pt); %punkt M
%\draw[color=black] (4.886666666666665,-0.56) node {$M$};
%\draw [fill=black] (1.5050517989607108,2.2159775755420767) circle (1.5pt); %punkt N
%\draw[color=black] (1.3266666666666653,2.5) node {$N$};
%\draw [fill=black] (0.838631756054356,-0.9006651355425708) circle (1.5pt); %punkt O
%\draw[color=black] (0.9866666666666652,-0.72) node {$O$};
%\draw [fill=black] (4.084996261174071,2.4318024770562343) circle (1.5pt); %punkt P
%\draw[color=black] (4.226666666666665,2.72) node {$P$};
%\draw [fill=black] (-0.7845770053117687,-0.9694730389841828) circle (1.5pt); %punkt Q
%\draw[color=black] (-0.8133333333333348,-0.78) node {$Q$};
%\draw [fill=black] (6.633506230037735,-0.6550201065405512) circle (1.5pt); %punkt R
%\draw[color=black] (6.666666666666665,-0.48) node {$R$};
%\draw [fill=black] (2.7887211584967786,4.001885117652153) circle (1.5pt); %punkt S
%\draw[color=black] (2.8866666666666654,4.41) node {$S$};
%\draw [fill=uuuuuu] (2.783763484103525,3.4664562831808063) circle (1.5pt); %punkt T
%\draw[color=uuuuuu] (2.8866666666666654,3.14) node {$T$};
\end{scriptsize}
\end{tikzpicture}
   \caption{}
   \label{fig: point L}
   \end{figure}

Since triangles $BJK$ and $AHI$ are perspective with perspective centers $D$, $F$, Theorem \ref{thm: perspective} yields that there is the third perspective
center $L= HJ\cap IK\cap AB$. In particular, the points $A$, $B$ and $L$ are collinear.
\end{proof}

Then we define the remaining $7$ points:
\begin{equation}\label{eq:points2}
   \begin{array}{ccc}
    M & = & BD \cap IK,\\
    N & = & AC \cap HJ,\\
    O & = & HJ \cap BC,\\
    P & = & AF \cap IK,\\
    Q & = & EF \cap NG,\\
    R & = & DE \cap CP,\\
    S & = & CP \cap NG.
   \end{array}
   \end{equation}

%   \begin{equation*}\label{eq:points2}
%   \begin{array}{llll}
%   M = BD \cap IK,          & N = CD \cap HJ,        & O = HJ \cap BC,      & P = AF \cap IK,\\
%   Q = EF \cap NG,          & R = DE \cap CP,        & S = CP \cap NG.
%   \end{array}
%   \end{equation*}

Here we claim two additional collinearities.
\begin{lemma}\label{lem: collinear2}
From the above assumptions it follows that
the points $A$, $B$ and $S$ are collinear.
\end{lemma}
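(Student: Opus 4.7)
The plan is to apply Pappus' theorem (Theorem~\ref{thm: perspective}, used in its equivalent ``hexagon'' formulation: for six points forming a hexagon inscribed in two lines, the three intersections of pairs of opposite sides are collinear) twice, each time to a hexagon inscribed in the pair of lines $AC$ and $AF$. These two lines are unusually well-populated by the construction so far: $N,C,D,K$ lie on $AC$ and $J,F,G,P$ lie on $AF$, so there is enough flexibility to form two useful hexagons whose opposite-side intersections can be identified with already-named points.

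First I introduce the auxiliary point $T := JC\cap KG$ and show $T\in AB$. Consider the hexagon $D,J,C,F,K,G$, with $D,C,K$ on $AC$ and $J,F,G$ on $AF$. The three pairs of opposite sides intersect at
\[
DJ\cap FK \;=\; DE\cap EF \;=\; E,\qquad JC\cap KG \;=\; T,\qquad CF\cap GD \;=\; BC\cap BD \;=\; B,
\]
using the incidences $D,J\in DE$, $F,K\in EF$, $C,F\in BC$, $D,G\in BD$. Pappus yields that $E,T,B$ are collinear, and since $E,B\in AB$, we deduce $T\in AB$.

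Next I apply Pappus a second time to the hexagon $N,J,C,P,K,G$, with $N,C,K$ on $AC$ and $J,P,G$ on $AF$; it is obtained from the first hexagon by swapping $D\leftrightarrow N$ and $F\leftrightarrow P$. The three opposite-side intersections are now
\[
NJ\cap PK \;=\; HJ\cap IK \;=\; L,\qquad JC\cap KG \;=\; T,\qquad CP\cap GN \;=\; S,
\]
using $N,J\in HJ$ and $P,K\in IK$. Pappus yields that $L,T,S$ are collinear. But $L\in AB$ by Lemma~\ref{lem: collinear1} and $T\in AB$ by the previous step; in the generic situation $L\ne T$, so the line spanned by them is exactly $AB$, and therefore $S\in AB$.

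The main difficulty is locating the two hexagons. In each Pappus triple we need two of the three opposite-side intersections to be visibly on $AB$; the hexagon $D,J,C,F,K,G$ achieves this via $E$ and $B$, while $N,J,C,P,K,G$ achieves it via $L$ (already known to lie on $AB$ by Lemma~\ref{lem: collinear1}) and the auxiliary $T$ produced by the first application. Once these two hexagons are spotted, the rest is a verification that each opposite-side intersection is the stated named point, which follows from the incidences recorded in the construction \eqref{eq:points1}--\eqref{eq:points2}.
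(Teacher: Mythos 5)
Your proof is correct, and it shares the paper's overall strategy---introduce an auxiliary point $T$ lying on $AB$ and apply Pappus twice---but the actual configurations you use are genuinely different. The paper takes $T=FN\cap DP$, applies its double-perspectivity form of Pappus to the triangles $NPE$ and $TJK$ to get $L$, $T$, $E$ collinear (hence $T\in AB$, since $L,E\in AB$), and then only sketches a second application giving $T$, $S$, $B$ collinear. You instead take $T=JC\cap KG$, which is a different point of $AB$ (with the coordinates of Section 2.2 one finds your $T=(a_1b_1c_1+a_2b_2c_2:a_1b_2c_1:0)$, whereas the paper's is $(a_1^2b_1^2c_1^2+a_2^2b_2^2c_2^2+a_1a_2b_1b_2c_1c_2:-a_1a_2b_2^2c_2:0)$), and you run both Pappus applications on hexagons inscribed in the single pair of carrier lines $AC$ and $AF$: the hexagon $DJCFKG$ yields $E$, $T$, $B$ collinear, and $NJCPKG$ yields $L$, $T$, $S$ collinear, so $S\in AB$ by Lemma \ref{lem: collinear1}. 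Your identifications of the opposite-side intersections ($DJ\cap FK=E$, $CF\cap GD=B$, $NJ\cap PK=L$, $CP\cap GN=S$) all follow from the incidences \eqref{eq:points1}--\eqref{eq:points2}, and invoking the hexagon form of Pappus is harmless since it is equivalent to Theorem \ref{thm: perspective}. What your route buys: both applications live on one pair of lines, the second application is written out in full rather than left to the reader, and no new joins beyond $JC$ and $KG$ are needed. Like the paper's proof, yours relies on mild genericity ($E\neq B$, $L\neq T$, the various joins being honest lines); this is at the same level of implicitness as the original argument and is acceptable because degenerate parameter choices are analyzed separately in Section 3.
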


\begin{proof}
In order to prove the collinearity of points $A$, $B$ and $S$ we introduce an extra point $T= FN\cap DP$. \\
\textbf{Claim.} The points $T$, $L$ and $E$ are collinear.\\
Note that this implies the collinearity of $T$ with $A$ and $B$ as well. Figure \ref{fig: point T} contains points relevant for the proof of the Claim.
\begin{figure}[H]
\centering
\begin{tikzpicture}[line cap=round,line join=round,>=triangle 45,x=1.0cm,y=1.0cm,scale=0.7]
%\clip(-6.233333333333335,-2.9) rectangle (14.726666666666665,9.08);
\clip(-3.233333333333335,-2.3) rectangle (9.726666666666665,9.08);
\draw [domain=-2.5:14.726666666666665] plot(\x,{(--9.541866666666666-1.54*\x)/3.26});%prosta DCKAN
\draw [domain=2.5:3.5] plot(\x,{(--5.9436-2.16*\x)/-0.02}); %prosta BEATSL
%\draw [domain=-6.233333333333335:9.4] plot(\x,{(-3.4741333333333326--0.62*\x)/3.28}); %prosta FOBIC
\draw [domain=-6.233333333333335:7.5] plot(\x,{(--0.4227038325134407-3.1112790298592836*\x)/-5.052572932158789});
%prosta FGJAP
\draw [domain=-6.233333333333335:8.75] plot(\x,{(-3.5117772595537353--1.751710360158245*\x)/5.039984333365261});
%prosta FQHEK
%\draw [domain=-6.233333333333335:14.726666666666665] plot(\x,{(-1.5778755345053685-0.5625375291001036*\x)/5.783293730432688});
%prosta DMBHG
\draw [domain=-6.233333333333335:14.726666666666665] plot(\x,{(--5.257943361515681-1.3629688593990654*\x)/5.775882329226216});
%prosta DRIEJ
\draw [domain=-6.233333333333335:14.726666666666665] plot(\x,{(--1.0317160740424687-1.0004833741555315*\x)/-0.21392961431237845});
%prosta LNJHO
\draw [domain=-6.233333333333335:14.726666666666665] plot(\x,{(--4.698918506243937-1.0224659349540723*\x)/0.21471685702482635});
%prosta LPKIM
%\draw [domain=-6.233333333333335:5.2] plot(\x,{(-0.21610909610215334-2.4630053806498506*\x)/-1.7703517483201314});
%prosta SNGQ
%\draw [domain=0:14.726666666666665] plot(\x,{(--14.32886322749831-2.3518024770562342*\x)/1.9416704054925944});
%prosta SPCR
%\draw [domain=-6.233333333333335:14.726666666666665] plot(\x,{(-6.944919157563774--0.3144529324436316*\x)/7.418083235349504});
%prosta QOMR
\draw [domain=-6.233333333333335:6] plot(\x,{(--2.8210648374795815--3.7072566054013603*\x)/3.790958064452835});
%prosta FNT
\draw [domain=-1:14.726666666666665] plot(\x,{(-25.247040507035813--3.534340006156338*\x)/-4.444964135925282});
%prosta DPT
\begin{scriptsize}
%\draw [fill=black] (2.7666666666666653,1.62) circle (1.5pt); %punkt A
%\draw[color=black] (3.166666666666665,1.65) node {$A$};
%\draw [fill=black] (2.7466666666666653,-0.54) circle (1.5pt); %punkt B
%\draw[color=black] (2.8866666666666654,-0.3) node {$B$};
%\draw [fill=black] (6.0266666666666655,0.08) circle (1.5pt); %punkt C
%\draw[color=black] (6.166666666666665,0.36) node {$C$};
\draw [fill=black] (8.529960397099353,-1.1025375291001036) circle (1.5pt); %punkt D
\draw[color=black] (8.666666666666666,-0.92) node {$D$};
\draw [fill=black] (2.754078067873137,0.2604313302989618) circle (1.5pt); %punkt E
\draw[color=black] (2.8866666666666654,0) node {$E$};
\draw [fill=black] (-2.285906265492124,-1.4912790298592833) circle (1.5pt); %punkt F
\draw[color=black] (-2.373333333333335,-1.29) node {$F$};
%\draw [fill=black] (-0.2652999493594206,-0.24702780510777403) circle (1.5pt); %punkt G
%\draw[color=black] (-0.35333333333333486,0.06) node {$G$};
%\draw [fill=black] (0.953056289709474,-0.3655365892428371) circle (1.5pt); %punkt H
%\draw[color=black] (0.8066666666666651,-0.12) node {$H$};
%\draw [fill=black] (4.634148432252167,-0.18321991016371605) circle (1.5pt); %punkt I
%\draw[color=black] (4.766666666666665,0.1) node {$I$};
\draw [fill=black] (1.1669859040218524,0.6349467849126943) circle (1.5pt); %punkt J
\draw[color=black] (1.3666666666666651,0.97) node {$J$};
\draw [fill=black] (4.419431575227341,0.8392460247903564) circle (1.5pt); %punkt K
\draw[color=black] (4.306666666666665,0.55) node {$K$};
\draw [fill=black] (2.8295388752500106,8.410198527001244) circle (1.5pt); %punkt L
\draw[color=black] (3.046666666666665,8.42) node {$L$};
%\draw [fill=black] (4.74999280834317,-0.7348623380116698) circle (1.5pt); %punkt M
%\draw[color=black] (4.886666666666665,-0.56) node {$M$};
\draw [fill=black] (1.5050517989607108,2.2159775755420767) circle (1.5pt); %punkt N
\draw[color=black] (1.3266666666666653,2.5) node {$N$};
%\draw [fill=black] (0.838631756054356,-0.9006651355425708) circle (1.5pt); %punkt O
%\draw[color=black] (0.9866666666666652,-0.72) node {$O$};
\draw [fill=black] (4.084996261174071,2.4318024770562343) circle (1.5pt); %punkt P
\draw[color=black] (4.226666666666665,2.72) node {$P$};
%\draw [fill=black] (-0.7845770053117687,-0.9694730389841828) circle (1.5pt); %punkt Q
%\draw[color=black] (-0.8133333333333348,-0.78) node {$Q$};
%\draw [fill=black] (6.633506230037735,-0.6550201065405512) circle (1.5pt); %punkt R
%\draw[color=black] (6.666666666666665,-0.48) node {$R$};
%\draw [fill=black] (2.7887211584967786,4.001885117652153) circle (1.5pt); %punkt S
%\draw[color=black] (2.8866666666666654,4.41) node {$S$};
\draw [fill=uuuuuu] (2.783763484103525,3.4664562831808063) circle (1.5pt); %punkt T
\draw[color=uuuuuu] (2.8866666666666654,3.14) node {$T$};
\end{scriptsize}
\end{tikzpicture}
   \caption{}
   \label{fig: point T}
   \end{figure}

   Pappus Theorem applied to triangles $NPE$ and $TJK$ yields that points $L$, $T$ and $E$ are collinear.

By the same token we show the collinearity of points $T$, $S$ and $B$. All relevant points are marked on Figure \ref{fig: point S}. We leave the exact argument
to the reader.
\begin{figure}[H]
\centering
\begin{tikzpicture}[line cap=round,line join=round,>=triangle 45,x=1.0cm,y=1.0cm,scale=0.7]
%\clip(-6.233333333333335,-2.9) rectangle (14.726666666666665,9.08);
\clip(-3.233333333333335,-2.3) rectangle (9.726666666666665,6.08);
\draw [domain=-2.5:14.726666666666665] plot(\x,{(--9.541866666666666-1.54*\x)/3.26});%prosta DCKAN
\draw [domain=2.5:3.5] plot(\x,{(--5.9436-2.16*\x)/-0.02}); %prosta BEATSL
\draw [domain=-6.233333333333335:9.4] plot(\x,{(-3.4741333333333326--0.62*\x)/3.28}); %prosta FOBIC
\draw [domain=-6.233333333333335:7.5] plot(\x,{(--0.4227038325134407-3.1112790298592836*\x)/-5.052572932158789});
%prosta FGJAP
%\draw [domain=-6.233333333333335:8.75] plot(\x,{(-3.5117772595537353--1.751710360158245*\x)/5.039984333365261});
%prosta FQHEK
\draw [domain=-6.233333333333335:14.726666666666665] plot(\x,{(-1.5778755345053685-0.5625375291001036*\x)/5.783293730432688});
%prosta DMBHG
%\draw [domain=-6.233333333333335:14.726666666666665] plot(\x,{(--5.257943361515681-1.3629688593990654*\x)/5.775882329226216});
%prosta DRIEJ
%\draw [domain=-6.233333333333335:14.726666666666665] plot(\x,{(--1.0317160740424687-1.0004833741555315*\x)/-0.21392961431237845});
%prosta LNJHO
%\draw [domain=-6.233333333333335:14.726666666666665] plot(\x,{(--4.698918506243937-1.0224659349540723*\x)/0.21471685702482635});
%prosta LPKIM
\draw [domain=-6.233333333333335:5.2] plot(\x,{(-0.21610909610215334-2.4630053806498506*\x)/-1.7703517483201314});
%prosta SNGQ
\draw [domain=0:14.726666666666665] plot(\x,{(--14.32886322749831-2.3518024770562342*\x)/1.9416704054925944});
%prosta SPCR
%\draw [domain=-6.233333333333335:14.726666666666665] plot(\x,{(-6.944919157563774--0.3144529324436316*\x)/7.418083235349504});
%prosta QOMR
\draw [domain=-6.233333333333335:6] plot(\x,{(--2.8210648374795815--3.7072566054013603*\x)/3.790958064452835});
%prosta FNT
\draw [domain=-1:14.726666666666665] plot(\x,{(-25.247040507035813--3.534340006156338*\x)/-4.444964135925282});
%prosta DPT
\begin{scriptsize}
%\draw [fill=black] (2.7666666666666653,1.62) circle (1.5pt); %punkt A
%\draw[color=black] (3.166666666666665,1.65) node {$A$};
\draw [fill=black] (2.7466666666666653,-0.54) circle (1.5pt); %punkt B
\draw[color=black] (2.8866666666666654,-0.3) node {$B$};
\draw [fill=black] (6.0266666666666655,0.08) circle (1.5pt); %punkt C
\draw[color=black] (6.166666666666665,0.36) node {$C$};
\draw [fill=black] (8.529960397099353,-1.1025375291001036) circle (1.5pt); %punkt D
\draw[color=black] (8.666666666666666,-0.92) node {$D$};
%\draw [fill=black] (2.754078067873137,0.2604313302989618) circle (1.5pt); %punkt E
%\draw[color=black] (2.8866666666666654,0) node {$E$};
\draw [fill=black] (-2.285906265492124,-1.4912790298592833) circle (1.5pt); %punkt F
\draw[color=black] (-2.373333333333335,-1.29) node {$F$};
\draw [fill=black] (-0.2652999493594206,-0.24702780510777403) circle (1.5pt); %punkt G
\draw[color=black] (-0.35333333333333486,0.06) node {$G$};
%\draw [fill=black] (0.953056289709474,-0.3655365892428371) circle (1.5pt); %punkt H
%\draw[color=black] (0.8066666666666651,-0.12) node {$H$};
%\draw [fill=black] (4.634148432252167,-0.18321991016371605) circle (1.5pt); %punkt I
%\draw[color=black] (4.766666666666665,0.1) node {$I$};
%\draw [fill=black] (1.1669859040218524,0.6349467849126943) circle (1.5pt); %punkt J
%\draw[color=black] (1.3666666666666651,0.97) node {$J$};
%\draw [fill=black] (4.419431575227341,0.8392460247903564) circle (1.5pt); %punkt K
%\draw[color=black] (4.306666666666665,0.55) node {$K$};
%\draw [fill=black] (2.8295388752500106,8.410198527001244) circle (1.5pt); %punkt L
%\draw[color=black] (3.046666666666665,8.42) node {$L$};
%\draw [fill=black] (4.74999280834317,-0.7348623380116698) circle (1.5pt); %punkt M
%\draw[color=black] (4.886666666666665,-0.56) node {$M$};
\draw [fill=black] (1.5050517989607108,2.2159775755420767) circle (1.5pt); %punkt N
\draw[color=black] (1.3266666666666653,2.5) node {$N$};
%\draw [fill=black] (0.838631756054356,-0.9006651355425708) circle (1.5pt); %punkt O
%\draw[color=black] (0.9866666666666652,-0.72) node {$O$};
\draw [fill=black] (4.084996261174071,2.4318024770562343) circle (1.5pt); %punkt P
\draw[color=black] (4.226666666666665,2.72) node {$P$};
%\draw [fill=black] (-0.7845770053117687,-0.9694730389841828) circle (1.5pt); %punkt Q
%\draw[color=black] (-0.8133333333333348,-0.78) node {$Q$};
%\draw [fill=black] (6.633506230037735,-0.6550201065405512) circle (1.5pt); %punkt R
%\draw[color=black] (6.666666666666665,-0.48) node {$R$};
\draw [fill=black] (2.7887211584967786,4.001885117652153) circle (1.5pt); %punkt S
\draw[color=black] (2.8866666666666654,4.41) node {$S$};
\draw [fill=uuuuuu] (2.783763484103525,3.4664562831808063) circle (1.5pt); %punkt T
\draw[color=uuuuuu] (2.8866666666666654,3.14) node {$T$};
\end{scriptsize}
\end{tikzpicture}
   \caption{}
   \label{fig: point S}
   \end{figure}
We conclude that the  points $A$, $B$ and $S$ are collinear as asserted in the Lemma.
\end{proof}

\begin{lemma}\label{lem: collinear3}
In the construction above the points $M$, $O$, $Q$ and $R$ are collinear.
\end{lemma}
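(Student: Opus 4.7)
The plan is to reduce the four-point collinearity of $M, O, Q, R$ to two three-point collinearities, $M, O, Q$ and $M, Q, R$: both lines contain the distinct points $M$ and $Q$ and hence coincide. Each three-point collinearity will be obtained from Theorem \ref{thm: Pascal} applied to a pair of cubics, each a product of three configuration lines, with the required irreducible conic produced by the converse of Pascal's theorem (Braikenridge--Maclaurin) applied to a hexagon of already-constructed points.

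For $M, O, Q$ collinear I take $\mathcal{C}_1 = BD \cup HJ \cup EF$ and $\mathcal{C}_2 = IK \cup BC \cup NG$. Since $G$ lies on $BD$ by construction and on $NG$ by the definition of that line, $BD \cap NG = G$, so $\mathcal{C}_1 \cap \mathcal{C}_2$ is the $3 \times 3$ grid $\{M, B, G, L, O, N, K, F, Q\}$ with $M, O, Q$ on the main diagonal. To apply Theorem \ref{thm: Pascal} with $n = 3$, $m = 2$, I need an irreducible conic through the six off-diagonal points. Arranged as the hexagon $B, F, G, N, K, L$, their opposite sides lie on $BC$ and $AC$; on $AF$ and $IK$; and on $NG$ and $AB$ (the last using Lemma \ref{lem: collinear1}); these pairs meet at $C$, $P$, and $S$, respectively, all three on the line $CP$ (the last by Lemma \ref{lem: collinear2}), hence collinear. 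Braikenridge--Maclaurin then produces the conic, and since each pair of vertices determines a unique configuration line containing no third vertex, no three of the six points are collinear, so the conic is irreducible.

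For $M, Q, R$ collinear I repeat the argument with $\mathcal{C}_1' = BD \cup EF \cup CP$ and $\mathcal{C}_2' = IK \cup NG \cup DE$, whose nine common points are $\{M, D, G, K, Q, E, P, S, R\}$; the auxiliary hexagon is $G, D, K, E, S, P$, whose opposite sides lie on $BD$ and $AB$; on $AC$ and $CP$; and on $EF$ and $AF$, meeting at $B$, $C$, $F$, visibly collinear on $BC$. Once $M, Q, R$ collinear is established by the same argument, combining it with $M, O, Q$ collinear yields the four-point collinearity. The main obstacle is the combinatorial step of spotting the two hexagons whose sides all lie on previously constructed configuration lines and whose Pascal intersection points are already known to be collinear from Lemmas \ref{lem: collinear1} and \ref{lem: collinear2}; once these hexagons are identified, the remaining verifications are routine.
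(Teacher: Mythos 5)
Your proof is correct and follows essentially the same route as the paper: you establish the collinearity of triples containing $M$ by applying Theorem \ref{thm: Pascal} with $n=3$, $m=2$ to pairs of reducible cubics built from configuration lines, after first producing the required conic from the collinear triple $C$, $P$, $S$ supplied by Lemmas \ref{lem: collinear1} and \ref{lem: collinear2} (your Braikenridge--Maclaurin step is exactly the paper's first application of Theorem \ref{thm: Pascal} with $m=1$ to the cubics $KN\cup FG\cup BL$ and $KL\cup GN\cup BF$), and your first triple $M$, $O$, $Q$ uses the very same cubics and the same conic through $B$, $F$, $G$, $K$, $L$, $N$ as the paper. The only deviations are immaterial: for the second triple you prove $M$, $Q$, $R$ collinear via the conic through $D$, $E$, $G$, $K$, $P$, $S$, whereas the paper proves $M$, $O$, $R$ via the conic through $B$, $C$, $D$, $J$, $L$, $P$, and you additionally verify irreducibility of the conics, a hypothesis of Theorem \ref{thm: Pascal} that the paper leaves implicit.
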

\begin{proof}
To prove the second collinearity (i.e. of points $O$, $Q$, $R$) we
take two reducible curves $\alpha=KN\cup FG\cup BL$ and
$\beta=KL\cup GN\cup BF$ of degree $3$. They intersect at $9$ points
and since points $C$, $P$, $S$ lie on a line, Theorem \ref{thm:
Pascal} yields that the remaining $6$ points: $B$, $F$, $G$, $K$,
$L$, $N$ lie on a conic, see Figure \ref{fig: conic1}.

\begin{figure}[H]
\centering
\begin{tikzpicture}[line cap=round,line join=round,>=triangle 45,x=1.0cm,y=1.0cm,scale=0.7]
%\clip(-6.233333333333335,-2.9) rectangle (14.726666666666665,9.08);
\clip(-3.233333333333335,-2.3) rectangle (9.726666666666665,9.08);
\draw [color=qqqqcc,domain=-2.5:14.726666666666665] plot(\x,{(--9.541866666666666-1.54*\x)/3.26});%prosta DCKAN
\draw [color=qqqqcc,domain=2.5:3.5] plot(\x,{(--5.9436-2.16*\x)/-0.02}); %prosta BEATSL
\draw [color=wwccqq,domain=-6.233333333333335:9.4] plot(\x,{(-3.4741333333333326--0.62*\x)/3.28}); %prosta FOBIC
\draw [color=qqqqcc,domain=-6.233333333333335:7.5] plot(\x,{(--0.4227038325134407-3.1112790298592836*\x)/-5.052572932158789});
%prosta FGJAP
%\draw [domain=-6.233333333333335:8.75] plot(\x,{(-3.5117772595537353--1.751710360158245*\x)/5.039984333365261});
%prosta FQHEK
%\draw [domain=-6.233333333333335:14.726666666666665] plot(\x,{(-1.5778755345053685-0.5625375291001036*\x)/5.783293730432688});
%prosta DMBHG
%\draw [domain=-6.233333333333335:14.726666666666665] plot(\x,{(--5.257943361515681-1.3629688593990654*\x)/5.775882329226216});
%prosta DRIEJ
%\draw [domain=-6.233333333333335:14.726666666666665] plot(\x,{(--1.0317160740424687-1.0004833741555315*\x)/-0.21392961431237845});
%prosta LNJHO
\draw [color=wwccqq,domain=-6.233333333333335:14.726666666666665] plot(\x,{(--4.698918506243937-1.0224659349540723*\x)/0.21471685702482635});
%prosta LPKIM
\draw [color=wwccqq,domain=-6.233333333333335:5.2] plot(\x,{(-0.21610909610215334-2.4630053806498506*\x)/-1.7703517483201314});
%prosta SNGQ
\draw [domain=0:14.726666666666665] plot(\x,{(--14.32886322749831-2.3518024770562342*\x)/1.9416704054925944});
%prosta SPCR
%\draw [domain=-6.233333333333335:14.726666666666665] plot(\x,{(-6.944919157563774--0.3144529324436316*\x)/7.418083235349504});
%prosta QOMR
%\draw [domain=-6.233333333333335:6] plot(\x,{(--2.8210648374795815--3.7072566054013603*\x)/3.790958064452835});
%prosta FNT
%\draw [domain=-1:14.726666666666665] plot(\x,{(-25.247040507035813--3.534340006156338*\x)/-4.444964135925282});
%prosta DPT
\begin{scriptsize}
%\draw [fill=black] (2.7666666666666653,1.62) circle (1.5pt); %punkt A
%\draw[color=black] (3.166666666666665,1.65) node {$A$};
\draw [fill=black] (2.7466666666666653,-0.54) circle (1.5pt); %punkt B
\draw[color=black] (2.8866666666666654,-0.3) node {$B$};
\draw [fill=black] (6.0266666666666655,0.08) circle (1.5pt); %punkt C
\draw[color=black] (6.166666666666665,0.36) node {$C$};
%\draw [fill=black] (8.529960397099353,-1.1025375291001036) circle (1.5pt); %punkt D
%\draw[color=black] (8.666666666666666,-0.92) node {$D$};
%\draw [fill=black] (2.754078067873137,0.2604313302989618) circle (1.5pt); %punkt E
%\draw[color=black] (2.8866666666666654,0) node {$E$};
\draw [fill=black] (-2.285906265492124,-1.4912790298592833) circle (1.5pt); %punkt F
\draw[color=black] (-2.373333333333335,-1.29) node {$F$};
\draw [fill=black] (-0.2652999493594206,-0.24702780510777403) circle (1.5pt); %punkt G
\draw[color=black] (-0.35333333333333486,0.06) node {$G$};
%\draw [fill=black] (0.953056289709474,-0.3655365892428371) circle (1.5pt); %punkt H
%\draw[color=black] (0.8066666666666651,-0.12) node {$H$};
%\draw [fill=black] (4.634148432252167,-0.18321991016371605) circle (1.5pt); %punkt I
%\draw[color=black] (4.766666666666665,0.1) node {$I$};
%\draw [fill=black] (1.1669859040218524,0.6349467849126943) circle (1.5pt); %punkt J
%\draw[color=black] (1.3666666666666651,0.97) node {$J$};
\draw [fill=black] (4.419431575227341,0.8392460247903564) circle (1.5pt); %punkt K
\draw[color=black] (4.306666666666665,0.55) node {$K$};
\draw [fill=black] (2.8295388752500106,8.410198527001244) circle (1.5pt); %punkt L
\draw[color=black] (3.046666666666665,8.42) node {$L$};
%\draw [fill=black] (4.74999280834317,-0.7348623380116698) circle (1.5pt); %punkt M
%\draw[color=black] (4.886666666666665,-0.56) node {$M$};
\draw [fill=black] (1.5050517989607108,2.2159775755420767) circle (1.5pt); %punkt N
\draw[color=black] (1.3266666666666653,2.5) node {$N$};
%\draw [fill=black] (0.838631756054356,-0.9006651355425708) circle (1.5pt); %punkt O
%\draw[color=black] (0.9866666666666652,-0.72) node {$O$};
\draw [fill=black] (4.084996261174071,2.4318024770562343) circle (1.5pt); %punkt P
\draw[color=black] (4.226666666666665,2.72) node {$P$};
%\draw [fill=black] (-0.7845770053117687,-0.9694730389841828) circle (1.5pt); %punkt Q
%\draw[color=black] (-0.8133333333333348,-0.78) node {$Q$};
%\draw [fill=black] (6.633506230037735,-0.6550201065405512) circle (1.5pt); %punkt R
%\draw[color=black] (6.666666666666665,-0.48) node {$R$};
\draw [fill=black] (2.7887211584967786,4.001885117652153) circle (1.5pt); %punkt S
\draw[color=black] (2.8866666666666654,4.41) node {$S$};
%\draw [fill=uuuuuu] (2.783763484103525,3.4664562831808063) circle (1.5pt); %punkt T
%\draw[color=uuuuuu] (2.8866666666666654,3.14) node {$T$};
\end{scriptsize}
\end{tikzpicture}
   \caption{}
   \label{fig: conic1}
   \end{figure}

Now we will show that the points $M$, $O$ and $Q$ lie on one line. In order to prove it we take the reducible cubics $\alpha= KL\cup BF\cup GN$ and
$\beta=BG\cup LN\cup FK$. They intersect in $9$ points and from the above statement we know that points  $B$, $F$, $G$, $K$, $L$, $N$ lie on a conic, hence the
remaining three points $M$, $O$ and $Q$ are collinear by Theorem \ref{thm: Pascal}, see Figure \ref{fig: conic2}.

\begin{figure}[H]
\centering
\begin{tikzpicture}[line cap=round,line join=round,>=triangle 45,x=1.0cm,y=1.0cm,scale=0.7]
%\clip(-6.233333333333335,-2.9) rectangle (14.726666666666665,9.08);
\clip(-3.233333333333335,-2.3) rectangle (9.726666666666665,9.08);
%\draw [color=qqqqcc,domain=-2.5:14.726666666666665] plot(\x,{(--9.541866666666666-1.54*\x)/3.26});%prosta DCKAN
%\draw [color=qqqqcc,domain=2.5:3.5] plot(\x,{(--5.9436-2.16*\x)/-0.02}); %prosta BEATSL
\draw [color=wwccqq,domain=-6.233333333333335:9.4] plot(\x,{(-3.4741333333333326--0.62*\x)/3.28}); %prosta FOBIC
%\draw [color=qqqqcc,domain=-6.233333333333335:7.5] plot(\x,{(--0.4227038325134407-3.1112790298592836*\x)/-5.052572932158789});
%prosta FGJAP
\draw [color=ffttww,domain=-6.233333333333335:8.75] plot(\x,{(-3.5117772595537353--1.751710360158245*\x)/5.039984333365261});
%prosta FQHEK
\draw [color=ffttww,domain=-6.233333333333335:14.726666666666665] plot(\x,{(-1.5778755345053685-0.5625375291001036*\x)/5.783293730432688});
%prosta DMBHG
%\draw [domain=-6.233333333333335:14.726666666666665] plot(\x,{(--5.257943361515681-1.3629688593990654*\x)/5.775882329226216});
%prosta DRIEJ
\draw [color=ffttww,domain=-6.233333333333335:14.726666666666665] plot(\x,{(--1.0317160740424687-1.0004833741555315*\x)/-0.21392961431237845});
%prosta LNJHO
\draw [color=wwccqq,domain=-6.233333333333335:14.726666666666665] plot(\x,{(--4.698918506243937-1.0224659349540723*\x)/0.21471685702482635});
%prosta LPKIM
\draw [color=wwccqq,domain=-6.233333333333335:5.2] plot(\x,{(-0.21610909610215334-2.4630053806498506*\x)/-1.7703517483201314});
%prosta SNGQ
%\draw [domain=0:14.726666666666665] plot(\x,{(--14.32886322749831-2.3518024770562342*\x)/1.9416704054925944});
%prosta SPCR
\draw [domain=-6.233333333333335:14.726666666666665] plot(\x,{(-6.944919157563774--0.3144529324436316*\x)/7.418083235349504});
%prosta QOMR
%\draw [domain=-6.233333333333335:6] plot(\x,{(--2.8210648374795815--3.7072566054013603*\x)/3.790958064452835});
%prosta FNT
%\draw [domain=-1:14.726666666666665] plot(\x,{(-25.247040507035813--3.534340006156338*\x)/-4.444964135925282});
%prosta DPT
\begin{scriptsize}
%\draw [fill=black] (2.7666666666666653,1.62) circle (1.5pt); %punkt A
%\draw[color=black] (3.166666666666665,1.65) node {$A$};
\draw [fill=black] (2.7466666666666653,-0.54) circle (1.5pt); %punkt B
\draw[color=black] (2.8866666666666654,-0.3) node {$B$};
%\draw [fill=black] (6.0266666666666655,0.08) circle (1.5pt); %punkt C
%\draw[color=black] (6.166666666666665,0.36) node {$C$};
%\draw [fill=black] (8.529960397099353,-1.1025375291001036) circle (1.5pt); %punkt D
%\draw[color=black] (8.666666666666666,-0.92) node {$D$};
%\draw [fill=black] (2.754078067873137,0.2604313302989618) circle (1.5pt); %punkt E
%\draw[color=black] (2.8866666666666654,0) node {$E$};
\draw [fill=black] (-2.285906265492124,-1.4912790298592833) circle (1.5pt); %punkt F
\draw[color=black] (-2.373333333333335,-1.29) node {$F$};
\draw [fill=black] (-0.2652999493594206,-0.24702780510777403) circle (1.5pt); %punkt G
\draw[color=black] (-0.35333333333333486,0.06) node {$G$};
%\draw [fill=black] (0.953056289709474,-0.3655365892428371) circle (1.5pt); %punkt H
%\draw[color=black] (0.8066666666666651,-0.12) node {$H$};
%\draw [fill=black] (4.634148432252167,-0.18321991016371605) circle (1.5pt); %punkt I
%\draw[color=black] (4.766666666666665,0.1) node {$I$};
%\draw [fill=black] (1.1669859040218524,0.6349467849126943) circle (1.5pt); %punkt J
%\draw[color=black] (1.3666666666666651,0.97) node {$J$};
\draw [fill=black] (4.419431575227341,0.8392460247903564) circle (1.5pt); %punkt K
\draw[color=black] (4.306666666666665,0.55) node {$K$};
\draw [fill=black] (2.8295388752500106,8.410198527001244) circle (1.5pt); %punkt L
\draw[color=black] (3.046666666666665,8.42) node {$L$};
\draw [fill=black] (4.74999280834317,-0.7348623380116698) circle (1.5pt); %punkt M
\draw[color=black] (4.886666666666665,-0.56) node {$M$};
\draw [fill=black] (1.5050517989607108,2.2159775755420767) circle (1.5pt); %punkt N
\draw[color=black] (1.3266666666666653,2.5) node {$N$};
\draw [fill=black] (0.838631756054356,-0.9006651355425708) circle (1.5pt); %punkt O
\draw[color=black] (0.9866666666666652,-0.72) node {$O$};
%\draw [fill=black] (4.084996261174071,2.4318024770562343) circle (1.5pt); %punkt P
%\draw[color=black] (4.226666666666665,2.72) node {$P$};
\draw [fill=black] (-0.7845770053117687,-0.9694730389841828) circle (1.5pt); %punkt Q
\draw[color=black] (-0.8133333333333348,-0.78) node {$Q$};
%\draw [fill=black] (6.633506230037735,-0.6550201065405512) circle (1.5pt); %punkt R
%\draw[color=black] (6.666666666666665,-0.48) node {$R$};
%\draw [fill=black] (2.7887211584967786,4.001885117652153) circle (1.5pt); %punkt S
%\draw[color=black] (2.8866666666666654,4.41) node {$S$};
%\draw [fill=uuuuuu] (2.783763484103525,3.4664562831808063) circle (1.5pt); %punkt T
%\draw[color=uuuuuu] (2.8866666666666654,3.14) node {$T$};
\end{scriptsize}
\end{tikzpicture}
   \caption{}
   \label{fig: conic2}
   \end{figure}
Analogously, using Theorem \ref{thm: Pascal} and taking curves $\alpha' =BD\cup CP\cup JL$ and $\beta' = BL\cup CD\cup JP$ one can prove that the points $B$,
$C$, $D$, $J$, $L$ and $P$ lie on a conic, and finally taking $\alpha'' = BD\cup CP\cup JL$ and $\beta'' = BC\cup DJ\cup LP$ one can prove that the points $M$,
$O$ and $R$ are collinear.

The collinearities of $M$, $O$, $Q$ and $M$, $O$, $R$ imply that the points $M$, $O$, $Q$ and $R$ lie on a line as asserted.
\end{proof}

The resulting configuration is indicated in Figure \ref{fig: whole
configuration}.

\begin{figure}[H]
\centering
\begin{tikzpicture}[line cap=round,line join=round,>=triangle 45,x=1.0cm,y=1.0cm]
%\clip(-6.233333333333335,-2.9) rectangle (14.726666666666665,9.08);
\clip(-3.233333333333335,-2.3) rectangle (9.726666666666665,9.08);
\draw [domain=-2.5:14.726666666666665] plot(\x,{(--9.541866666666666-1.54*\x)/3.26});%prosta DCKAN
\draw [domain=2.5:3.5] plot(\x,{(--5.9436-2.16*\x)/-0.02}); %prosta BEATSL
\draw [domain=-6.233333333333335:9.4] plot(\x,{(-3.4741333333333326--0.62*\x)/3.28}); %prosta FOBIC
\draw [domain=-6.233333333333335:7.5] plot(\x,{(--0.4227038325134407-3.1112790298592836*\x)/-5.052572932158789});
%prosta FGJAP
\draw [domain=-6.233333333333335:8.75] plot(\x,{(-3.5117772595537353--1.751710360158245*\x)/5.039984333365261});
%prosta FQHEK
\draw [domain=-6.233333333333335:14.726666666666665] plot(\x,{(-1.5778755345053685-0.5625375291001036*\x)/5.783293730432688});
%prosta DMBHG
\draw [domain=-6.233333333333335:14.726666666666665] plot(\x,{(--5.257943361515681-1.3629688593990654*\x)/5.775882329226216});
%prosta DRIEJ
\draw [domain=-6.233333333333335:14.726666666666665] plot(\x,{(--1.0317160740424687-1.0004833741555315*\x)/-0.21392961431237845});
%prosta LNJHO
\draw [domain=-6.233333333333335:14.726666666666665] plot(\x,{(--4.698918506243937-1.0224659349540723*\x)/0.21471685702482635});
%prosta LPKIM
\draw [domain=-6.233333333333335:5.2] plot(\x,{(-0.21610909610215334-2.4630053806498506*\x)/-1.7703517483201314});
%prosta SNGQ
\draw [domain=0:14.726666666666665] plot(\x,{(--14.32886322749831-2.3518024770562342*\x)/1.9416704054925944});
%prosta SPCR
\draw [domain=-6.233333333333335:14.726666666666665] plot(\x,{(-6.944919157563774--0.3144529324436316*\x)/7.418083235349504});
%prosta QOMR
%\draw [domain=-6.233333333333335:6] plot(\x,{(--2.8210648374795815--3.7072566054013603*\x)/3.790958064452835});
%prosta FNT
%\draw [domain=-1:14.726666666666665] plot(\x,{(-25.247040507035813--3.534340006156338*\x)/-4.444964135925282});
%prosta DPT
\begin{scriptsize}
\draw [fill=black] (2.7666666666666653,1.62) circle (1.5pt); %punkt A
\draw[color=black] (3.166666666666665,1.65) node {$A$};
\draw [fill=black] (2.7466666666666653,-0.54) circle (1.5pt); %punkt B
\draw[color=black] (2.8866666666666654,-0.3) node {$B$};
\draw [fill=black] (6.0266666666666655,0.08) circle (1.5pt); %punkt C
\draw[color=black] (6.166666666666665,0.36) node {$C$};
\draw [fill=black] (8.529960397099353,-1.1025375291001036) circle (1.5pt); %punkt D
\draw[color=black] (8.666666666666666,-0.92) node {$D$};
\draw [fill=black] (2.754078067873137,0.2604313302989618) circle (1.5pt); %punkt E
\draw[color=black] (2.8866666666666654,0) node {$E$};
\draw [fill=black] (-2.285906265492124,-1.4912790298592833) circle (1.5pt); %punkt F
\draw[color=black] (-2.373333333333335,-1.29) node {$F$};
\draw [fill=black] (-0.2652999493594206,-0.24702780510777403) circle (1.5pt); %punkt G
\draw[color=black] (-0.35333333333333486,0.06) node {$G$};
\draw [fill=black] (0.953056289709474,-0.3655365892428371) circle (1.5pt); %punkt H
\draw[color=black] (0.8066666666666651,-0.12) node {$H$};
\draw [fill=black] (4.634148432252167,-0.18321991016371605) circle (1.5pt); %punkt I
\draw[color=black] (4.766666666666665,0.1) node {$I$};
\draw [fill=black] (1.1669859040218524,0.6349467849126943) circle (1.5pt); %punkt J
\draw[color=black] (1.3666666666666651,0.97) node {$J$};
\draw [fill=black] (4.419431575227341,0.8392460247903564) circle (1.5pt); %punkt K
\draw[color=black] (4.306666666666665,0.55) node {$K$};
\draw [fill=black] (2.8295388752500106,8.410198527001244) circle (1.5pt); %punkt L
\draw[color=black] (3.046666666666665,8.42) node {$L$};
\draw [fill=black] (4.74999280834317,-0.7348623380116698) circle (1.5pt); %punkt M
\draw[color=black] (4.886666666666665,-0.56) node {$M$};
\draw [fill=black] (1.5050517989607108,2.2159775755420767) circle (1.5pt); %punkt N
\draw[color=black] (1.3266666666666653,2.5) node {$N$};
\draw [fill=black] (0.838631756054356,-0.9006651355425708) circle (1.5pt); %punkt O
\draw[color=black] (0.9866666666666652,-0.72) node {$O$};
\draw [fill=black] (4.084996261174071,2.4318024770562343) circle (1.5pt); %punkt P
\draw[color=black] (4.226666666666665,2.72) node {$P$};
\draw [fill=black] (-0.7845770053117687,-0.9694730389841828) circle (1.5pt); %punkt Q
\draw[color=black] (-0.8133333333333348,-0.78) node {$Q$};
\draw [fill=black] (6.633506230037735,-0.6550201065405512) circle (1.5pt); %punkt R
\draw[color=black] (6.666666666666665,-0.48) node {$R$};
\draw [fill=black] (2.7887211584967786,4.001885117652153) circle (1.5pt); %punkt S
\draw[color=black] (2.8866666666666654,4.41) node {$S$};
%\draw [fill=uuuuuu] (2.783763484103525,3.4664562831808063) circle (1.5pt); %punkt T
%\draw[color=uuuuuu] (2.8866666666666654,3.14) node {$T$};
\end{scriptsize}
\end{tikzpicture}
   \caption{}
   \label{fig: whole configuration}
   \end{figure}

\subsection{Algebraic proof}
   In this section we provide another
construction using algebraic methods quite in the spirit of Tao's
survey \cite{Tao2014}. This approach makes the study of the parameter
space, in particular degenerate cases, accessible. Since any two
lines on the projective plane are projectively equivalent we may
assume that points $A$, $B$ and $C$ are fundamental points, i.e.
$$A=(1:0:0), \textrm{ } B=(0:1:0), \textrm{ } C=(0:0:1).$$
Then we have the following equations of lines
$$AB: z=0, \textrm{ } AC: y=0, \textrm{ } BC: x=0.$$
On these lines, we choose points $E$, $D$, $F$ respectively, all distinct from the fundamental points. Thus we may assume that their
coordinates are:
$$ D=(a_2:0:a_1), \textrm{ } E=(b_1:b_2:0), \textrm{ } F=(0:c_1:c_2),$$
with $a_1,a_2,b_1,b_2,c_1,c_2 \neq 0$. Hence we obtain the following equations of lines
\begin{equation}\label{lines1}
   \begin{array}{ccc}
    AF & : & c_2y-c_1z=0,\\
    BD & : & a_1x-a_2z=0,\\
    EF & : & b_2c_2x-b_1c_2y+b_1c_1z=0,\\
    ED & : & a_1b_2x-a_1b_1y-a_2b_2z=0,\\
    \end{array}
   \end{equation}
which gives us the coordinates of points
\begin{equation}\label{p.coordinates1}
   \begin{array}{ccc}
    G = AF\cap BD & = & (a_2c_2:a_1c_1:a_1c_2),\\
    H = BD\cap EF & = & (a_2b_1c_2:a_2b_2c_2+a_1b_1c_1:a_1b_1c_2),\\
    I = BC\cap ED & = & (0:a_2b_2:-a_1b_1),\\
    J = AF\cap ED & = & (a_1b_1c_1+a_2b_2c_2:a_1b_2c_1:a_1b_2c_2),\\
    K = AC\cap EF & = & (b_1c_1:0:-b_2c_2).\\
   \end{array}
   \end{equation}
Then we obtain equations of the lines
\renewcommand\arraystretch{1.2}
 \begin{equation}\label{lines2}
   \begin{array}{ccc}
    HJ & : & a_1a_2 b_2^2 c_2^2 x + a_1^2b_1^2c_1 c_2 y - (a_1^2b_1^2c_1^2 + a_2^2b_2^2 c_2^2 +a_1a_2b_1b_2c_1c_2)z=0,\\
    IK & : & a_2b_2^2 c_2 x + a_1b_1^2c_1y + a_2b_1b_2c_1z=0,\\
    \end{array}
   \end{equation}
   and the point $L= HJ\cap IK =(a_1b_1^2c_1:-a_2b_2^2 c_2 : 0)$,
   hence it lies on the line through $A$ and $B$. This provides an alternative proof of Lemma \ref{lem: collinear1}.

    We find the coordinates of the remaining points
\renewcommand\arraystretch{1.2}
    \begin{equation}\label{p.coordinates2}
   \begin{array}{ccc}
    M = BD\cap IK & = & (-a_1a_2b_1^2c_1:a_2^2b_2^2 c_2 +a_1a_2b_1b_2c_1:-a_1^2b_1^2c_1),\\
    N = AC\cap HJ & = & (a_1^2b_1^2c_1^2 + a_2^2b_2^2 c_2^2 +a_1a_2b_1b_2c_1c_2:0:a_1a_2 b_2^2 c_2^2),\\
    O = HJ\cap BC & = & (0:a_1^2b_1^2c_1^2 + a_2^2b_2^2 c_2^2 +a_1a_2b_1b_2c_1c_2:a_1^2b_1^2c_1 c_2),\\
    P = AF\cap IK & = & (a_1b_1^2c_1^2+a_2b_1b_2c_1c_2:-a_2b_2^2c_1 c_2:- a_2b_2^2 c_2^2).
   \end{array}
   \end{equation}
   The remaining lines are
\renewcommand\arraystretch{1.2}
   \begin{equation}\label{lines3}
   \begin{array}{ccc}
    NG & : & a_1a_2 b_2^2 c_2^2 x + (a_1^2b_1^2c_1 c_2 +a_1a_2b_1 b_2 c_2^2) y - (a_1^2b_1^2c_1^2 + a_2^2b_2^2 c_2^2 +a_1 a_2b_1b_2c_1c_2)z=0,\\
    CP & : & a_2b_2^2 c_2 x + (a_1b_1^2c_1+a_2b_1b_2c_2)y=0,\\
    \end{array}
   \end{equation}
and finally we obtain the coordinates of points
\renewcommand\arraystretch{1.2}
    \begin{equation}\label{p.coordinates3}
   \begin{array}{ccc}
    Q = EF\cap NG & = & (a_2^2b_1b_2 c_2^2:a_1^2b_1^2c_1^2 +2a_1a_2b_1b_2c_1c_2 + a_2^2b_2^2 c_2^2:a_1^2b_1^2c_1 c_2 + 2a_1a_2b_1b_2 c_2^2),\\
    R = DE\cap CP & = & (a_1a_2b_1^2c_1 + a_2^2b_1b_2 c_2 :-a_2^2b_2^2 c_2: a_1^2b_1^2c_1 + 2a_1a_2b_1b_2c_2),\\
    S = CP\cap NG & = & (a_1b_1^2c_1+a_2b_1b_2c_2:-a_2b_2^2 c_2:0).
   \end{array}
   \end{equation}
Again, it is now obvious that $S$ is collinear with $A$ and $B$.
This gives an alternative proof of Lemma \ref{lem: collinear2}.

Checking the vanishing of determinants
\renewcommand\arraystretch{1.2}
  \[
  \begin{vmatrix}
  -a_2a_1b_1^2c_1 & a_2^2b_2^2 c_2 +a_1a_2b_1b_2c_1 & -a_1^2b_1^2c_1 \\
  0 & a_1^2b_1^2c_1^2 + a_2^2b_2^2 c_2^2 +a_1a_2b_1b_2c_1c_2 & a_1^2b_1^2c_1 c_2 \\
  a_2^2b_1b_2 c_2^2 & a_1^2b_1^2c_1^2 +2a_1a_2b_1b_2c_1c_2 + a_2^2b_2^2 c_2^2  & a_1^2b_1^2c_1 c_2 + 2a_2a_1b_1b_2 c_2^2
  \end{vmatrix} =0
  \]
  and
\renewcommand\arraystretch{1.2}
  \[
  \begin{vmatrix}
  -a_2a_1b_1^2c_1 & a_2^2b_2^2 c_2 +a_1a_2b_1b_2c_1 & -a_1^2b_1^2c_1 \\
  0 & a_1^2b_1^2c_1^2 + a_2^2b_2^2 c_2^2 +a_1a_2b_1b_2c_1c_2 & a_1^2b_1^2c_1 c_2 \\
  a_1a_2b_1^2c_1 + a_2^2b_1b_2 c_2 & -a_2^2b_2^2 c_2 & a_1^2b_1^2c_1 + 2a_1a_2b_1b_2c_2
  \end{vmatrix} = 0
  \]
  we obtain the collinearity of points $M, O, Q$ and $M, O, R$, what finally implies that all points $M, O, Q, R$ lie on a common line and
\renewcommand\arraystretch{1.2}
   \begin{equation}
   \begin{array}{ccc}
    MO & : & a_2(2a_1^2b_2^2c_2^2+a_2^2b_1^2c_1^2+2a_1a_2b_1b_2c_1c_2)x+a_1a_2^2b_1^2c_1c_2y-\\
    & & a_1(a_2^2b_1^2c_1^2+a_1^2b_2^2c_2^2+a_1a_2b_1b_2c_1c_2)z=0.\\
    \end{array}
   \end{equation}
This gives an alternative proof of Lemma \ref{lem: collinear3}.

\section{Parameter space for B\"or\"oczky configuration of $12$ lines and degenerations}
   The results presented in the previous section and this section put
   together give a proof of Theorem A.

   All configurations in which the points $D,E,F$ are mutually distinct from the points $A,B,C$
   are parametrized by $3$ parameters in $(\F^*)^3$, where $\F$ is the ground field (either $\R$ or $\Q$). For various
   reasons it is convenient to work with a compact parameter space. A natural compactification
   coming here into the picture is by $\calm=(\P^1(\F))^3$. This is a multi-homogeneous space
   with coordinates $(a_1:a_2)$, $(b_1:b_2)$ and $(c_1:c_2)$ introduced above.
   Our motivation to include degenerations stems also from the interest if they
   lead to containment counterexamples.

   We evaluate now the conditions that all points and lines appearing in the construction are distinct.
   Comparing the coordinates of all points and the equations of all lines we obtain the following
   degeneracy conditions.
\begin{itemize}
\item[i)] $a_1a_2b_1b_2c_1c_2= 0$
\item[ii)] $a_1b_1c_1+a_2b_2c_2= 0$,
\item[iii)]$a_1b_1c_1+2a_2b_2c_2= 0$
\end{itemize}
   This means that if the numbers $a_1,\dots,c_2$ do not satisfy any of above equations,
   then the points $A,\ldots,S$ defined in the previous section together with appropriate
   lines form a $B12$ configuration.
   The conditions i), ii) and iii) define divisors $D_1, D_2, D_3$ in $(\P^1)^3$ of three-degree $(2,2,2)$, $(1,1,1)$,
   and $(1,1,1)$ respectively. Their union contains all degenerate configurations.

   Below we present degenerations of our configuration corresponding to points in these divisors
   and their intersections.
\subsection{Degenerations coming from divisor $D_1$}
   We begin with the divisor $D_1$. This is the union of three pairs of disjoint ''planes'' ($\P^1\times\P^1$) in $\calm$.
   If the parameter $\frakm=((a_1:a_2),(b_1:b_2),(c_1:c_2))\in\calm$ is taken from $D_1$ but does not belong
   to its singular locus, then just one coordinate is zero. Without loss of generality we may assume
   that it is $a_1=0$, i.e. $D=A$. Then the configuration degenerates to the configuration
   of $7$ lines with $6$ triple points. These incidences are indicated in Figure \ref{fig: degeneracja_1_1}.

  \begin{figure}[H]
  \centering
\begin{tikzpicture}[line cap=round,line join=round,>=triangle 45,x=1.0cm,y=1.0cm,scale=0.5]
\clip(-4.3,-5.68) rectangle (16.66,6.3);
\draw [domain=-4.3:16.66] plot(\x,{(--12.5556-4.74*\x)/-2.14});
\draw [domain=-4.3:16.66] plot(\x,{(-12.0996--2.34*\x)/5.74});
\draw [domain=-4.3:16.66] plot(\x,{(--22.656-2.4*\x)/3.6});
\draw [domain=-4.3:16.66] plot(\x,{(--15.268267104102089-4.1115455800167435*\x)/-0.59840667918637});
\draw [domain=-4.3:16.66] plot(\x,{(--11.391297635285644-2.6618182557289725*\x)/-2.7729976656180257});
\draw [domain=-4.3:16.66] plot(\x,{(--23.11624806668627-3.8424603757694564*\x)/-6.418325992435999});
\draw [domain=-4.3:16.66] plot(\x,{(-1.2613065060610236-0.25000284285084473*\x)/1.4137400152307782});
\begin{scriptsize}
\draw [fill=black] (4.22,3.48) circle (1.5pt);
\draw[color=black] (3.86,3.38) node {$A$};
\draw [fill=black] (7.82,1.08) circle (1.5pt);
\draw[color=black] (7.88,0.88) node {$B$};
\draw [fill=black] (2.08,-1.26) circle (1.5pt);
\draw[color=black] (1.94,-1.) node {$C$};
\draw [fill=black] (6.3945909864316555,2.0302726757122294) circle (1.5pt);
\draw[color=black] (6.44,1.78) node {$E$};
\draw [fill=black] (3.6215933208136297,-0.6315455800167433) circle (1.5pt);
\draw[color=black] (3.8,-0.74) node {$F$};
\draw [fill=black] (1.4016740075640006,-2.7624603757694564) circle (1.5pt);
\draw[color=black] (1.54,-2.9) node {$K$};
\draw [fill=black] (3.4937400152307783,-1.5100028428508447) circle (1.5pt);
\draw[color=black] (3.72,-1.7) node {$P$};
\draw [fill=black] (14.66943396226417,-3.486289308176113) circle (1.5pt);
\draw[color=black] (14.8,-3.2) node {$R$};
\end{scriptsize}
\end{tikzpicture}
   \caption{}
   \label{fig: degeneracja_1_1}
   \end{figure}

   Let now $\frakm$ be a double point on $D_1$. Without loss of generality, we may assume
   that it is defined by $a_1=b_1=0$. Then the whole configuration degenerates to a quasi-pencil
   on four lines, i.e. there are $4$ lines and one triple point. The incidences are
   indicated in  Figure \ref{fig: degeneracja_1_2}.
  \begin{figure}[H]
  \centering
  \begin{tikzpicture}[line cap=round,line join=round,>=triangle 45,x=1.0cm,y=1.0cm,scale=0.5]
\clip(-5.32,-5.84) rectangle (15.64,6.14);
\draw [domain=-5.32:15.64] plot(\x,{(--16.6992-5.88*\x)/1.86});
\draw [domain=-5.32:15.64] plot(\x,{(-27.6784--3.94*\x)/2.96});
\draw [domain=-5.32:15.64] plot(\x,{(--13.754-1.94*\x)/4.82});
\draw [domain=-5.32:15.64] plot(\x,{(--11.573634226447709--0.9768277698614787*\x)/7.011322385479689});
\begin{scriptsize}
\draw [fill=black] (2.22,1.96) circle (1.5pt);
\draw[color=black] (2.36,2.24) node {$A$};
\draw [fill=black] (7.04,0.02) circle (1.5pt);
\draw[color=black] (7.12,-0.26) node {$B$};
\draw [fill=black] (4.08,-3.92) circle (1.5pt);
\draw[color=black] (4.32,-3.84) node {$C$};
\draw [fill=black] (9.23132238547969,2.9368277698614786) circle (1.5pt);
\draw[color=black] (9.38,2.76) node {$F$};
\end{scriptsize}
\end{tikzpicture}
   \caption{}
   \label{fig: degeneracja_1_2}
   \end{figure}

   Finally, if $\frakm$ is a triple point on $D_1$, then everything reduces to the triangle
   with vertices $A,B,C$.

\subsection{Degenerations associated to $D_2$}

   This case has an easy geometric interpretation, namely it happens when the points $D,E,F$ are collinear,
   i.e. to begin with we get a Menelaus configuration. It implies that some points and lines in the generic
   configuration coincide. In fact there are only six lines left. They intersect in $4$ triple and $3$ double
   points. This is indicated in Figure \ref{fig: degeneracja_2}.

  \begin{figure}[H]
  \centering
  \begin{tikzpicture}[line cap=round,line join=round,>=triangle 45,x=1.0cm,y=1.0cm,scale=0.2]
\clip(-14.01,-11.35) rectangle (33.15,14.48);
\draw [domain=3.5:5] plot(\x,{(--26.450325-6.345*\x)/-0.18});
\draw [domain=-14.01:33.15] plot(\x,{(--53.294625-8.01*\x)/3.915});
\draw [domain=-14.01:33.15] plot(\x,{(-0.5618249999999989--1.665*\x)/-4.095});
\draw [domain=-14.01:33.15] plot(\x,{(--3.647729597766448-0.558347545084886*\x)/2.4847052311637965});
\draw [domain=-14.01:33.15] plot(\x,{(-48.55863717015044-1.6926186001013068*\x)/-11.622343442994083});
\draw [domain=-14.01:33.15] plot(\x,{(-10.1438844107281--1.5095985558325449*\x)/2.5433703687784757});
\begin{scriptsize}
\draw [fill=black] (4.305,4.805) circle (1.5pt);
\draw[color=black] (4.62,5.435) node {$A$};
\draw [fill=black] (4.125,-1.54) circle (1.5pt);
\draw[color=black] (4.44,-0.91) node {$B$};
\draw [fill=black] (8.22,-3.205) circle (1.5pt);
\draw[color=black] (8.535,-2.575) node {$C$};
\draw [fill=black] (6.668370368778476,-0.03040144416745516) circle (1.5pt);
\draw[color=black] (7.005,0.62) node {$D$};
\draw [fill=black] (4.183665137614679,0.5279461009174309) circle (1.5pt);
\draw[color=black] (4.485,1.16) node {$E$};
\draw [fill=black] (-7.317343442994084,3.112381399898693) circle (1.5pt);
\draw[color=black] (-6.99,3.725) node {$F$};
\draw [fill=black] (18.23233960191375,6.833306440585946) circle (1.5pt);
\draw[color=black] (18.525,7.46) node {$G$};
\end{scriptsize}
\end{tikzpicture}
   \caption{}
   \label{fig: degeneracja_2}
   \end{figure}

\subsection{Degenerations associated to divisor $D_3$}
   The last degenerating condition is $iii)$. Interesting phenomena in this case is that
   points $E,Q,R,S$ coincide so that the point $E$ turns to a sixtuple point, there are $15$ triple
   points in this configuration and $6$ double points, which are not named in Figure \ref{fig: degeneracja_4}
   indicating the incidences in this case.
  \begin{figure}[H]
  \centering
\begin{tikzpicture}[line cap=round,line join=round,>=triangle 45,x=1.0cm,y=1.0cm,scale=0.5]
\clip(-4.4661028893587,-5.795193798449621) rectangle (16.981338971106428,6.463410852713181);
\draw [domain=-4.4661028893587:16.981338971106428] plot(\x,{(--7.2116--3.4866666666666513*\x)/10.566666666666675});
\draw [domain=-4.4661028893587:16.981338971106428] plot(\x,{(--22.9396-6.54*\x)/3.26});
\draw [domain=-4.4661028893587:16.981338971106428] plot(\x,{(--96.20053333333328-10.026666666666651*\x)/-7.3066666666666755});
\draw [domain=-4.4661028893587:16.981338971106428] plot(\x,{(--35.93326941080506-7.15840044824718*\x)/1.3858800947002186});
\draw [dash pattern=on 3pt off 3pt,domain=-4.4661028893587:16.981338971106428] plot(\x,{(--6.554660188041995-1.1250859791106933*\x)/2.211662230924625});
\draw [domain=-4.4661028893587:16.981338971106428] plot(\x,{(--18.94279767913543-4.611752645777345*\x)/-8.35500443574205});
\draw [domain=-4.4661028893587:16.981338971106428] plot(\x,{(--25.277256549465793-3.5970107676784018*\x)/3.9811481066040697});
\draw [domain=-4.4661028893587:16.981338971106428] plot(\x,{(--6.632371598286905-0.5795477193181251*\x)/3.200001140342788});
\draw [domain=-4.4661028893587:16.981338971106428] plot(\x,{(--17.034015923467628-2.6043867421084266*\x)/3.78897418611664});
\draw [domain=-4.4661028893587:16.981338971106428] plot(\x,{(-28.25062441853348--5.331015093309159*\x)/-4.308195703751384});
\draw [domain=-4.4661028893587:16.981338971106428] plot(\x,{(--30.54044832941373-2.8753615381914415*\x)/7.883319285934947});
\draw [domain=-4.4661028893587:16.981338971106428] plot(\x,{(-5.844079982481696--1.5633635913192028*\x)/0.608715219597856});
\draw [domain=-4.4661028893587:16.981338971106428] plot(\x,{(-20.880090352473573--3.5395997422338588*\x)/-2.1009569587586405});
\begin{scriptsize}
\draw [fill=black] (2.72,1.58) circle (1.5pt);
\draw[color=black] (2.8604087385482813,1.8587596899224788) node {$A$};
\draw [fill=black] (13.286666666666676,5.066666666666651) circle (1.5pt);
\draw[color=black] (13.420408738548286,4.928527131782946) node {$B$};
\draw [fill=black] (5.98,-4.96) circle (1.5pt);
\draw[color=black] (6.196222692036654,-4.874263565891481) node {$D$};
\draw [fill=black] (4.594119905299782,2.19840044824718) circle (1.5pt);
\draw[color=black] (4.7431994362227,2.493178294573642) node {$E$};
\draw [fill=black] (4.931662230924625,0.45491402088930677) circle (1.5pt);
\draw[color=black] (5.070641296687817,0.7331782945736404) node {$I$};
\draw [fill=black] (8.575268011903852,-1.3986103194312218) circle (1.5pt);
\draw[color=black] (8.836222692036657,-1.1905426356589195) node {$H$};
\draw [fill=black] (3.6370317630651496,-0.2596894878668947) circle (1.5pt);
\draw[color=black] (3.8427343199436304,-0.26961240310077916) node {$C$};
\draw [fill=black] (5.920001140342788,1.000452280681875) circle (1.5pt);
\draw[color=black] (6.073431994362236,1.2857364341085247) node {$F$};
\draw [fill=black] (9.810111768901647,0.29591979966063997) circle (1.5pt);
\draw[color=black] (10.084594785059913,0.5285271317829425) node {$G$};
\draw [fill=black] (4.786293825787212,1.2057764226772048) circle (1.5pt);
\draw[color=black] (4.927385482734328,1.4903875968992226) node {$J$};
\draw [fill=black] (0.6234665271732415,5.785929114198466) circle (1.5pt);
\draw[color=black] (0.7729668780831634,6.074573643410854) node {$K$};
\draw [fill=black] (3.7482455411103297,1.9192885917985474) circle (1.5pt);
\draw[color=black] (3.88366455250177,2.2066666666666648) node {$L$};
\draw [fill=black] (7.557854485432987,-2.794769027289046) circle (1.5pt);
\draw[color=black] (7.792501761804098,-2.725426356589154) node {$M$};
\draw [fill=black] (1.9267924829667,3.1712813378520814) circle (1.5pt);
\draw[color=black] (2.0622692036645596,3.4550387596899217) node {$N$};
\draw [fill=black] (5.4568975266743465,0.7448307149448125) circle (1.5pt);
\draw[color=black] (5.725525017618049,0.7945736434108497) node {$O$};
\draw [fill=black] (4.245746982663006,1.303674103452308) circle (1.5pt);
\draw[color=black] (4.006455250176189,1.244806201550385) node {$P$};
\end{scriptsize}
\end{tikzpicture}
   \caption{}
   \label{fig: degeneracja_4}
   \end{figure}
   Note that, compared to the general case, there are additional
   collinearities of points: $NEG$, $CEP$, $EMO$ and $AIH$.
   The last mentioned line is not a line of the original configuration.
   We present incidences of this degeneration in Table \ref{tab:config iv}.
\begin{table}
\begin{tabular}{|c|c|c|c|c|c|c|c|c|c|c|c|c|}
  \hline
    & AC & AB & BC & AF & BD & EF & DE & HJ & IK & EG & CE & EM \\
  \hline
  A & + & + &  & + &  &  &  &  &  &  &   & \\
  \hline
  B &  & + & + &  & + &  &  &  &  &  &  &  \\
  \hline
  C & + &  & + &  &  &  &  &  &  &  & + & \\
  \hline
  D & + &  &  &  & + &  & + &  &  &  &  & \\
  \hline
  E &  & + &  &  &  & + & + &  &  & + & + & +\\
  \hline
  F &  &  & + & + &  & + &  &  &  &  &  & \\
  \hline
  G &  &  &  & + & + &  &  &  &  & + &  & \\
  \hline
  H &  &  &  &  & + & + &  & + &  &  &  & \\
  \hline
  I &  &  & + &  &  &  & + &  & + &  &  & \\
  \hline
  J &  &  &  & + &  &  & + & + &  &  &  & \\
  \hline
  K & + &  &  &  &  & + &  &  & + &  &  & \\
  \hline
  L &  & + &  &  &  &  &  & + & + &  &  & \\
  \hline
  M &  &  &  &  & + &  &  &  & + &  &  & +\\
  \hline
  N & + &  &  &  &  &  &  & + &  & + &  & \\
  \hline
  O &  &  & + &  &  &  &  & + &  &  &  & +\\
  \hline
  P &  &  &  & + &  &  &  &  & + &  &  + & \\
  \hline
\end{tabular}
\caption{: Incidence table for degeneration iii)}
\label{tab:config iv}
\end{table}

   The incidences between the divisors $D_1,\ldots,D_3$ are explained in the next remark.
 \begin{remark}
 The intersection locus $Z$ of any two divisors $D_i$, $D_j$ is contained also in
 the third divisor $D_k$, for $\{i,j,k\}=\{1,2,3\}$ and this set $Z$ is contained
 in the singular locus of divisor $D_1$. The degenerations corresponding to points in $Z$
 (away of triple points of $D_1$) are indicated in Figure \ref{fig: degeneracja_1_2}.
 \end{remark}
In this case both conditions are equivalent to
$a_1b_1c_1=a_2b_2c_2$.

We conclude this section by the following observation.
\begin{corollary}
   None of degenerate configurations provides a counterexample to the
   containment problem. This shows in particular that being a counterexample
   configuration is not a closed condition.
\end{corollary}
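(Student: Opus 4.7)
The plan is to verify the containment $I^{(3)}\subset I^2$ separately on each of the three strata $D_1$, $D_2$, $D_3$ of degenerate configurations, where $I$ denotes the radical ideal of the points of multiplicity at least $3$ in the given configuration.

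For the $D_1$ stratum the scheme of singular points has cardinality at most $6$ (Figure \ref{fig: degeneracja_1_1}) and degenerates further to at most one triple point. For schemes of at most $6$ reduced points in $\P^2$ the containment $I^{(3)}\subset I^2$ holds by elementary Hilbert-function considerations, so nothing substantial happens here. Similarly, for the $D_2$ stratum the Menelaus configuration produces only four triple points, and such a small ideal is easily seen to satisfy $I^{(3)}\subset I^2$ by a direct computation or by noting that four points in general position form a complete intersection of two conics, for which $I^{(m)}=I^m$ for every $m$.

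The only nontrivial stratum is $D_3$, where the singular locus consists of $15$ ordinary triple points together with the sextuple point $E$. Here I would fix a convenient rational specialization of $(a_i,b_i,c_i)$ satisfying $a_1b_1c_1+2a_2b_2c_2=0$, compute the ideal $I$ explicitly from the coordinates in equations (\ref{p.coordinates1})--(\ref{p.coordinates3}), and verify $I^{(3)}\subset I^2$ directly via a Gr\"obner-basis computation in a computer algebra system such as Macaulay 2 or Singular. Conceptually, the critical feature is that $6$ of the $12$ configuration lines now concentrate at $E$, so the canonical obstruction $f=\prod_{i=1}^{12}\ell_i$ responsible for the generic counterexample vanishes to order $6$ at $E$; this excess vanishing is what blocks the obstruction from persisting into the degenerate stratum.

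The main obstacle is precisely the $D_3$ case: one must verify not only that $f$ itself lies in $I^2$, but also that no other element of $I^{(3)}$ survives as an obstruction. Because $I^{(3)}$ and $I^2$ agree in all sufficiently high degrees (from $\reg(I^2)$ onwards), this reduces to a finite computation in a bounded range of degrees, feasible by computer algebra. The outcome is that every degenerate configuration in the closure of the $B12$ parameter space $\calm$ satisfies $I^{(3)}\subset I^2$, so failure of the containment is an open--rather than closed--condition on $\calm$, as the corollary asserts.
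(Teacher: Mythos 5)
Your overall strategy is the same as the paper's: the paper disposes of the corollary by a direct symbolic-algebra check (Singular) on the explicit coordinates, and your treatment of the decisive stratum $D_3$ is exactly such a computation, supplemented by theoretical arguments for $D_1$ and $D_2$. However, as written there is a genuine gap: each divisor $D_i$ is a two-dimensional family of parameter values, so verifying the containment for \emph{one} "convenient rational specialization" does not by itself prove the statement for \emph{every} degenerate configuration (and, as the corollary itself stresses, failure of the containment is not a closed condition, so no semicontinuity argument can be invoked). To close this you must either run the Gr\"obner computation with $a_1,\dots,c_2$ as symbolic parameters (which is what working with the paper's parametric coordinates amounts to), or first observe that the two-dimensional torus of projectivities fixing $A,B,C$ acts on the parameters $((a_1:a_2),(b_1:b_2),(c_1:c_2))$ preserving the ratio $a_1b_1c_1/a_2b_2c_2$, whose level sets are precisely the strata (ratio $-1$ on $D_2$, $-2$ on $D_3$, and analogously on the components of $D_1$); hence, away from the deeper degenerations, all configurations in a fixed stratum are projectively equivalent and a single representative does suffice. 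Without one of these two supplements the specialization step proves less than the corollary asserts.

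Two smaller points. For $D_1$ you assert that any scheme of at most $6$ reduced points satisfies $I^{(3)}\subset I^2$ "by elementary Hilbert-function considerations''; this is plausible but not proved, and note that the six triple points of the degenerate configuration are \emph{not} in general position (several lie on common configuration lines), so a genuinely elementary argument or a direct check is still needed there; by contrast your $D_2$ argument is sound, since the four triple points are the vertices of a complete quadrilateral, no three collinear, hence a complete intersection of two conics with $I^{(m)}=I^m$. Finally, the justification "$I^{(3)}$ and $I^2$ agree in all degrees $\ge\reg(I^2)$'' is misstated: what is true is $I^{(3)}_t\subseteq I^{(2)}_t=(I^2)_t$ for $t\ge\reg(I^2)$; the conclusion that the containment test is a finite computation is of course correct (it is an ideal-membership test on finitely many generators), so this is only a slip of wording, not of substance.
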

\proof
   Since we have concrete coordinates to work with, the claim can be easily checked
   with a symbolic algebra program, we used Singular \cite{DGPS}.
\endproof
\section{Constructions of $15$ lines with $31$ triple points}
   The configuration we are now interested in is visualized in Figure \ref{fig: B15}.
   For clarity the points in the figure are labeled by numbers only. The number $i$ in the picture corresponds
   to the point $P_i$ in the text below.
  \begin{figure}[H]
  \centering
\begin{tikzpicture}[line cap=round,line join=round,>=triangle 45,x=1.0cm,y=1.0cm,scale=0.5]
\clip(-8.9,-13.67) rectangle (18.83,10.23);
\draw [domain=-6.9:18.83] plot(\x,{(-41.63--9.51*\x)/1});    %15  5
\draw [domain=-8.9:18.83] plot(\x,{(-52.35--7.36*\x)/2.39}); %13  7
\draw [domain=1.9:18.83] plot(\x,{(-23.44--2.56*\x)/1.48});  %25 23
\draw [domain=.9:18.83] plot(\x,{(--21.32-2.2*\x)/-1.98});   %24 22
\draw [domain=-.9:18.83] plot(\x,{(--35.9-4.55*\x)/-6.26});  %12 10
\draw [domain=-3:18.83] plot(\x,{(--8.72-3.89*\x)/-8.74});   %19  9
\draw [domain=-5:18.83] plot(\x,{(-22.1-1.61*\x)/-7.57});    %21  6
\draw [domain=-6.9:18] plot(\x,{(-16.4-0*\x)/-2.96});
\draw [domain=-8:16] plot(\x,{(--19.09-0.61*\x)/2.89});      %16 30
\draw [domain=-8.9:14] plot(\x,{(--42.91-3.15*\x)/7.07});    %20  7
\draw [domain=-8.9:11] plot(\x,{(--32.91-5.62*\x)/7.74});    % 8 18
\draw [domain=-8.9:8] plot(\x,{(--9.17-5.75*\x)/5.18});      %11 10
\draw [domain=-8.9:5.5] plot(\x,{(-0.51-2.56*\x)/1.48});     %23 22
\draw [domain=-8.9:4] plot(\x,{(-0.06--2.81*\x)/-0.91});     %25 24
\draw [domain=-8.9:18.83] plot(\x,{(-13.53--7.7*\x)/-0.81}); % 6  9
\begin{scriptsize}
\fill [color=uuuuuu] (-6.46,8.95) circle (3pt);
\draw[color=uuuuuu] (-6.2,9.3) node {$_1$};
\fill [color=uuuuuu] (-4.57,7.57) circle (3pt);
\draw[color=uuuuuu] (-4.2,8.25) node {$_{27}$};
\fill [color=uuuuuu] (-1.78,5.55) circle (3pt);
\draw[color=uuuuuu] (-1.3,6) node {$_{31}$};
\fill [color=uuuuuu] (1.42,3.22) circle (3pt);
\draw[color=uuuuuu] (1.6,3.7) node {$_6$};
\fill [color=uuuuuu] (4.48,1) circle (3pt);
\draw[color=uuuuuu] (4.29,1.6) node {$_4$};
\fill [color=uuuuuu] (6.87,-0.74) circle (3pt);
\draw[color=uuuuuu] (6.86,-0.1) node {$_8$};
\fill [color=uuuuuu] (-3.4,5.55) circle (3pt);
\draw[color=uuuuuu] (-3.7,5.2) node {$_{17}$};
\fill [color=uuuuuu] (-0.87,2.73) circle (3pt);
\draw[color=uuuuuu] (-0.47,3.17) node {$_{21}$};
\fill [color=uuuuuu] (1.78,-0.21) circle (3pt);
\draw[color=uuuuuu] (2,0.4) node {$_9$};
\fill [color=uuuuuu] (4.08,-2.77) circle (3pt);
\draw[color=uuuuuu] (4.6,-2.75) node {$_{10}$};
\fill [color=uuuuuu] (5.65,-4.5) circle (3pt);
\draw[color=uuuuuu] (5.2,-4.5) node {$_{11}$};
\fill [color=uuuuuu] (0.3,-0.86) circle (3pt);
\draw[color=uuuuuu] (0.56,-0.3) node {$_{19}$};
\fill [color=uuuuuu] (2.19,-4.14) circle (3pt);
\draw[color=uuuuuu] (2.7,-4.2) node {$_{12}$};
\fill [color=uuuuuu] (3.67,-6.7) circle (3pt);
\draw[color=uuuuuu] (4.25,-6.62) node {$_{22}$};
\fill [color=uuuuuu] (4.48,-8.1) circle (3pt);
\draw[color=uuuuuu] (5.1,-8) node {$_{23}$};
\fill [color=uuuuuu] (2.59,-7.9) circle (3pt);
\draw[color=uuuuuu] (2.1,-7.68) node {$_{24}$};
\fill [color=uuuuuu] (3.31,-10.13) circle (3pt);
\draw[color=uuuuuu] (2.5,-10.1) node {$_{25}$};
\fill [color=uuuuuu] (3.07,-12.45) circle (3pt);
\draw[color=uuuuuu] (3.55,-12.4) node {$_3$};
\fill [color=uuuuuu] (-2.28,7.09) circle (3pt);
\draw[color=uuuuuu] (-2,7.6) node {$_{29}$};
\fill [color=uuuuuu] (1.17,5.55) circle (3pt);
\draw[color=uuuuuu] (0.8,5.2) node {$_{14}$};
\fill [color=uuuuuu] (4.79,3.94) circle (3pt);
\draw[color=uuuuuu] (4.61,4.4) node {$_5$};
\fill [color=uuuuuu] (7.94,2.54) circle (3pt);
\draw[color=uuuuuu] (7.89,3) node {$_7$};
\fill [color=uuuuuu] (10.07,1.58) circle (3pt);
\draw[color=uuuuuu] (10,2.17) node {$_{20}$};
\fill [color=uuuuuu] (4.96,5.55) circle (3pt);
\draw[color=uuuuuu] (5.4,6) node {$_{15}$};
\fill [color=uuuuuu] (8.66,4.76) circle (3pt);
\draw[color=uuuuuu] (8.4,5.3) node {$_{13}$};
\fill [color=uuuuuu] (11.55,4.15) circle (3pt);
\draw[color=uuuuuu] (11.4,4.6) node {$_{30}$};
\fill [color=uuuuuu] (13.13,3.81) circle (3pt);
\draw[color=uuuuuu] (13.3,3.45) node {$_{16}$};
\fill [color=uuuuuu] (12.36,5.55) circle (3pt);
\draw[color=uuuuuu] (12.2,6) node {$_{28}$};
\fill [color=uuuuuu] (14.7,5.55) circle (3pt);
\draw[color=uuuuuu] (14.6,6.4) node {$_{26}$};
\fill [color=uuuuuu] (16.83,6.5) circle (3pt);
\draw[color=uuuuuu] (16.9,6.15) node {$_2$};
\fill [color=uuuuuu] (8.18,-1.69) circle (3pt);
\draw[color=uuuuuu] (8.75,-1.65) node {$_{18}$};
\end{scriptsize}
\end{tikzpicture}
   \caption{}
   \label{fig: B15}
   \end{figure}

\subsection{Construction}
   In this section we construct a configuration of   $15$ lines with $31$ triple points using the algebraic method.
   To begin with let $P_1$, $P_2$, $P_3$ and $P_4$ be the standard points, i.e.
   $$ P_1=(1:0:0),\;\; P_2=(0:1:0), \;\; P_3=(0:0:1), \;\; P_4=(1:1:1).$$
   Then we have the following equations of lines
   $$P_1P_4: -y+z=0, \;\; P_2P_4: x-z=0, \;\; P_3P_4: -x+y=0.$$
   On the line $P_3P_4$  we choose a point $P_5$ distinct from the fundamental points. Thus we may assume that its
   coordinates are:
   $$ P_5=(a:a:1)$$
   with $a\notin\{0,1\}$. Hence we obtain the following equations of lines
 \begin{equation}
   \begin{array}{ccc}
    P_1P_5 & : & -y+az=0,\\
    P_2P_5 & : & x-az=0,\\
    \end{array}
 \end{equation}
which gives us coordinates of the points
 \begin{equation*}
   \begin{array}{ccc}
    P_6 = P_1P_4\cap P_2P_5 & = & (a:1:1),\\
    P_7 = P_2P_4\cap P_1P_5 & = & (1:a:1).\\
   \end{array}
   \end{equation*}
Then we obtain the following equations of the lines and coordinates of points
 \begin{equation*}
   \begin{array}{ccc}
    P_3P_6 & : & -x+ay=0,\\
    P_3P_7 & : & ax-y=0,\\
   \end{array}
   \end{equation*}
 \begin{equation*}
   \begin{array}{ccc}
    P_8 = P_1P_4\cap P_3P_7 & = & (1:a:a),\\
   \end{array}
   \end{equation*}
 \begin{equation*}
   \begin{array}{ccc}
    P_2P_8 & : & ax-z=0,\\
   \end{array}
   \end{equation*}
 \begin{equation*}
   \begin{array}{ccc}
    P_9 = P_2P_4\cap P_3P_6 & = & (a:1:a),\\
   \end{array}
   \end{equation*}
 \begin{equation*}
   \begin{array}{ccc}
    P_1P_9 & : & -ay+z=0,\\
   \end{array}
   \end{equation*}
 \begin{equation*}
   \begin{array}{ccc}
    P_{10} = P_2P_8\cap P_3P_4 & = & (1:1:a).\\
   \end{array}
   \end{equation*}
\begin{fact}
   The incidence $P_{10}\in P_1P_9$ follows from above choices.
\end{fact}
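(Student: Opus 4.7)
The statement is a direct coordinate verification, so my plan is just to substitute and check. The line $P_1P_9$ has already been computed in the excerpt as $-ay + z = 0$ (this is immediate from the fact that $P_1 = (1:0:0)$ forces the equation to be of the form $\lambda y + \mu z = 0$, and then plugging $P_9 = (a:1:a)$ into $\lambda y + \mu z$ gives the relation $\lambda + a\mu = 0$, whence $-ay + z = 0$ up to scaling).

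With the line equation in hand, the verification that $P_{10} = (1:1:a)$ lies on it reduces to evaluating $-a\cdot 1 + a = 0$. This is tautological, and so the fact is proved with a single substitution.

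Because the calculation is this short, there is really no obstacle to speak of; the content of the fact is not in the verification itself but in the observation that the construction has been arranged so that this ``extra'' incidence comes for free, in the same spirit as Lemmas \ref{lem: collinear1}, \ref{lem: collinear2}, \ref{lem: collinear3} in the $B12$ construction. If one wanted a more conceptual proof (paralleling the Pappus-style arguments of Section 2), one could observe that the points $P_6, P_7, P_8, P_9, P_{10}$ are obtained from $P_5$ by a cyclic procedure intertwining the three pencils through the fundamental points $P_1, P_2, P_3$, and the closing-up condition $P_{10} \in P_1P_9$ expresses the fact that this procedure is a well-defined closed cycle; however, for the purposes of the present paper the one-line algebraic check above is entirely sufficient.
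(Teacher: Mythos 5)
Your verification is correct and is exactly what the paper intends: with $P_1P_9\colon -ay+z=0$ and $P_{10}=(1:1:a)$ the incidence is the identity $-a+a=0$, which is the direct coordinate check the paper leaves implicit. No further comment is needed.
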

   Further we obtain coordinates of the points
 \begin{equation*}
   \begin{array}{ccc}
    P_{11} = P_3P_7\cap P_1P_9 & = & (1:a:a^2),\\
    P_{12} = P_3P_6\cap P_2P_8 & = & (a:1:a^2),\\
    P_{13} = P_2P_5\cap P_3P_7 & = & (a:a^2:1),\\
    P_{14} = P_1P_5\cap P_3P_6 & = & (a^2:a:1).\\
   \end{array}
   \end{equation*}
Now we encounter the second choice in this construction. On the line
$P_3P_4$ we choose a point $P_{15}$ distinct from the fundamental
points and the point $P_5$. Thus we may assume that its coordinates
are:
$$ P_{15}=(b:b:1)$$
with $b\notin\{0,1,a\}$.

Then we obtain the following equations of lines and coordinates of points
\begin{equation*}
   \begin{array}{ccc}
    P_{14}P_{15} & : & (-a+b)x+(a^2-b)y+(-a^2b+ab)z=0,\\
    P_{13}P_{15} & : & (-a^2+b)x+(a-b)y+(a^2b-ab)z=0,\\
   \end{array}
   \end{equation*}
 \begin{equation*}
   \begin{array}{ccc}
    P_{16} = P_2P_8\cap P_{13}P_{15} & = & (a-b:-a^3b+a^2b+a^2-b:a^2-ab),\\
    P_{17} = P_1P_9\cap P_{14}P_{15} & = & (a^3b-a^2b-a^2+b:-a+b:-a^2+ab),\\
   \end{array}
   \end{equation*}
 \begin{equation*}
   \begin{array}{ccc}
    P_{11}P_{16} & : & (-a^5b+a^4b+a^4-a^3)x+(-a^3+a^2b+a^2-ab)y+\\
    & & (a^3b-a^2b-ab+b)z=0,\\
    P_{12}P_{17} & : & (-a^3+a^2b+a^2-ab)x+(-a^5b+a^4b+a^4-a^3)y+\\
    & & (a^3b-a^2b-ab+b)z=0,\\
   \end{array}
   \end{equation*}
 \begin{equation*}
   \begin{array}{ccl}
    P_{18} & = & P_1P_4\cap P_{11}P_{16} = \\
    & = & (-a^3b+a^3-a^2+2ab-b:-a^5b+a^4b+a^4-a^3:-a^5b+a^4b+a^4-a^3),\\
    P_{19} & = & P_2P_4\cap P_{12}P_{17} = \\
    & = & (-a^5b+a^4b+a^4-a^3:-a^3b+a^3-a^2+2ab-b :-a^5b+a^4b+a^4-a^3),\\
   \end{array}
   \end{equation*}
 \begin{equation*}
   \begin{array}{ccc}
    P_{20} = P_1P_5\cap P_2P_8 & = & (1:a^2:a),\\
    P_{21} = P_2P_5\cap P_1P_9 & = & (a^2:1:a),\\
   \end{array}
   \end{equation*}
 \begin{equation*}
   \begin{array}{ccc}
    P_{18}P_{20} & : & (a^7b-2a^6b-a^6+a^5b-a^4)x+(-a^5b+2a^4b-2a^2b+ab)y+\\
    & & (a^5-a^4b-2a^4+2a^3b+a^3-a^2b)z=0,\\
    P_{19}P_{21} & : & (-a^5b+2a^4b-2a^2b+ab)x+(a^7b-2a^6b-a^6+a^5b-a^4)y+\\
    & & (a^5-a^4b-2a^4+2a^3b+a^3-a^2b)z=0,\\
   \end{array}
   \end{equation*}
 \begin{equation*}
   \begin{array}{ccl}
    P_{22} = P_3P_4\cap P_{11}P_{16} & = & (a^3b-a^2b-ab+b:a^3b-a^2b-ab+b:\\
    &  & a^5b-a^4b-a^4+2a^3-a^2b-a^2+ab).\\
   \end{array}
   \end{equation*}
\begin{fact}
Note that the incidence $P_{22}\in P_{12}P_{17}$ does not impose any
additional conditions on $a$ and $b$.
\end{fact}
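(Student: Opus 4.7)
The plan is to avoid multiplying out any of the long polynomial expressions and instead to exploit the evident symmetry of the construction under the involution
$$\sigma \colon \P^2 \longrightarrow \P^2, \qquad (x:y:z) \longmapsto (y:x:z).$$
Every step that has been carried out so far comes in $x$/$y$-symmetric pairs, and this should force the desired incidence for free.

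First I would verify that $\sigma$ permutes all points already defined. Reading off the listed coordinates, $\sigma$ fixes $P_3,P_4,P_5,P_{10},P_{15}$, and swaps the pairs $(P_1,P_2),(P_6,P_7),(P_8,P_9),(P_{11},P_{12}),(P_{13},P_{14}),(P_{16},P_{17})$; the only nontrivial check is that $\sigma(P_{16})=(-a^3b+a^2b+a^2-b:a-b:a^2-ab)$ equals $-P_{17}$ in homogeneous coordinates, which is immediate from the formulas in the excerpt. Consequently $\sigma$ sends the line $P_{11}P_{16}$ to the line $P_{12}P_{17}$, while the line $P_3P_4$, defined by the $\sigma$-invariant equation $-x+y=0$, is fixed.

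Next I would observe that the first two homogeneous coordinates of $P_{22}$ are equal, so $\sigma(P_{22})=P_{22}$. Therefore
$$P_{22} \;=\; \sigma(P_{22}) \;\in\; \sigma\bigl(P_3P_4\bigr)\cap\sigma\bigl(P_{11}P_{16}\bigr) \;=\; P_3P_4\cap P_{12}P_{17},$$
and the desired incidence holds identically in $a$ and $b$, imposing no new condition. Equivalently, and with no reference to $\sigma$: the equations of $P_{11}P_{16}$ and $P_{12}P_{17}$ displayed above have the same $z$-coefficient and have their $x$- and $y$-coefficients swapped, so substituting a point with first two coordinates equal gives the same value in both equations; since $P_{22}$ satisfies the first equation by construction, it satisfies the second as well. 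There is no real obstacle here: the only thing to take care of is keeping the pairing of points under $\sigma$ straight, which is purely bookkeeping.
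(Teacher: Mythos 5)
Your argument is correct. The paper itself offers no written proof of this Fact --- it is asserted on the strength of the displayed coordinates, the implicit justification being the direct substitution of $P_{22}$ into the equation of $P_{12}P_{17}$ (the kind of coordinate check the authors perform throughout, by hand or with a computer algebra system). Your symmetry route via the involution $\sigma(x:y:z)=(y:x:z)$ is a genuinely more conceptual packaging of the same phenomenon: once one checks that $\sigma$ swaps $P_{11}\leftrightarrow P_{12}$ and $P_{16}\leftrightarrow P_{17}$ (your computation $\sigma(P_{16})=-P_{17}$ up to scalar is right) and fixes the line $P_3P_4$, the incidence $P_{22}\in P_{12}P_{17}$ follows for all $a,b$ because $P_{22}$, lying on $x=y$, is $\sigma$-fixed. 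This explains \emph{why} no condition on $a,b$ can arise, rather than merely observing that a polynomial happens to vanish, and the same involution also explains the pairing of the later incidences in \eqnref{conditions}. Your closing ``equivalently'' paragraph --- same $z$-coefficient, swapped $x$- and $y$-coefficients, evaluated at a point with equal first two coordinates --- is essentially the paper's implicit verification made explicit, and it is airtight since $P_{22}\in P_{11}P_{16}$ holds by definition of $P_{22}$. So: correct, with a cleaner and slightly more general argument than the unstated computation the paper relies on.
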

The coordinates of the remaining points are now easy to find:
\begin{equation*}
   \begin{array}{ccl}
    P_{23} = P_3P_7\cap P_{12}P_{17} & = & (-a^3b+a^2b+ab-b:-a^4b+a^3b+a^2b-ab:\\
    & & -a^6b+a^5b+a^5-a^4-a^3+a^2b+a^2-ab),\\
    P_{24} = P_3P_6\cap P_{11}P_{16} & = & (-a^4b+a^3b+a^2b-ab:-a^3b+a^2b+ab-b:\\
    & & -a^6b+a^5b+a^5-a^4-a^3+a^2b+a^2-ab),\\
    P_{25} = P_3P_4\cap P_{18}P_{20} & = & (a^5-a^4b-2a^4+2a^3b+a^3-a^2b:\\
    & & a^5-a^4b-2a^4+2a^3b+a^3-a^2b:\\
    & & -a^7b+2a^6b+a^6-2a^5-2a^4b+a^4+2a^2b-ab),\\
    P_{26} = P_2P_4\cap P_{11}P_{16} & = & (-a^3+a^2b+a^2-ab:\\
    & & a^5b-a^4b-a^4-a^3b+a^3+a^2b+ab-b:\\
    & & -a^3+a^2b+a^2-ab),\\
    P_{27} = P_1P_4\cap P_{12}P_{17} & = & (a^5b-a^4b-a^4-a^3b+a^3+a^2b+ab-b:\\
    & & -a^3+a^2b+a^2-ab:-a^3+a^2b+a^2-ab),\\
    P_{28} = P_2P_5\cap P_{14}P_{15} & = & (a^3-ab:a^2b+a^2-2ab:a^2-b),\\
    P_{29} = P_1P_5\cap P_{13}P_{15} & = & (a^2b+a^2-2ab:a^3-ab:a^2-b),\\
    P_{30} = P_2P_4\cap P_{13}P_{15} & = & (a-b:-a^2b+a^2+ab-b:a-b),\\
    P_{31} = P_1P_4\cap P_{14}P_{15} & = & (-a^2b+a^2+ab-b:a-b:a-b).\\
   \end{array}
   \end{equation*}
\begin{fact}
It is easy to see that $P_{25}\in P_{19}P_{21}$.
\end{fact}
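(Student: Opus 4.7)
The plan is to prove the fact by exploiting an obvious symmetry of the entire construction rather than by direct computation on the (rather large) explicit coordinates. The key observation is that the projective involution
\[
\sigma : \P^2 \longrightarrow \P^2, \qquad (x:y:z) \longmapsto (y:x:z),
\]
preserves the construction in a precise sense: it fixes the points $P_3$, $P_4$, $P_5$ and $P_{15}$ (since their first two coordinates coincide), swaps $P_1 \leftrightarrow P_2$, and its fixed line is exactly $P_3P_4 = \{x = y\}$. The strategy is therefore to show, by walking through the construction step by step, that every subsequent auxiliary point constructed so far occurs in a $\sigma$-symmetric pair or is fixed by $\sigma$, and then to read off the desired incidence as a tautology.

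First I would verify inductively the pairing
\[
\sigma(P_6)=P_7,\ \sigma(P_8)=P_9,\ \sigma(P_{11})=P_{12},\ \sigma(P_{13})=P_{14},\ \sigma(P_{16})=P_{17},\ \sigma(P_{18})=P_{19},\ \sigma(P_{20})=P_{21},
\]
and $\sigma(P_{10})=P_{10}$, $\sigma(P_{22})=P_{22}$, $\sigma(P_{25})=P_{25}$. Each such verification is immediate from the definitions: for instance $P_{20}=P_1P_5\cap P_2P_8$, so
\[
\sigma(P_{20}) = \sigma(P_1P_5)\cap \sigma(P_2P_8) = P_2P_5\cap P_1P_9 = P_{21},
\]
and analogously $\sigma(P_{18})=\sigma(P_1P_4)\cap\sigma(P_{11}P_{16}) = P_2P_4 \cap P_{12}P_{17} = P_{19}$. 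These identities do not require any computation beyond tracking which points and lines get interchanged.

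Once symmetry of the construction is in place, the conclusion is immediate. Indeed, $P_{25} = P_3P_4 \cap P_{18}P_{20}$ lies on the fixed line $P_3P_4$, so $\sigma(P_{25}) = P_{25}$. Applying $\sigma$ to the defining intersection gives
\[
P_{25} = \sigma(P_{25}) = \sigma(P_3P_4)\cap \sigma(P_{18}P_{20}) = P_3P_4\cap P_{19}P_{21},
\]
and in particular $P_{25}\in P_{19}P_{21}$, as claimed. The same symmetry principle, incidentally, also yields the earlier two Facts: $P_{10}\in P_1P_9$ follows because $P_{10}\in P_2P_8$ and $\sigma(P_{10})=P_{10}$, $\sigma(P_2P_8)=P_1P_9$; and $P_{22}\in P_{12}P_{17}$ follows because $P_{22}\in P_{11}P_{16}$ and $\sigma$ interchanges these lines while fixing $P_{22}$. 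The only possible obstacle is a bookkeeping slip in the inductive verification of the pairings, but since every step of the construction uses only the symmetric initial data $P_1,\dots,P_5,P_{15}$ and lines defined by pairs of already-paired points, this inductive step is routine.
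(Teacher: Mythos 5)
Your argument is correct, and it takes a genuinely different route from the paper. The paper offers no conceptual proof at all: the Fact is meant to be checked by direct substitution of the explicit coordinates of $P_{25}$ into the displayed equation of the line $P_{19}P_{21}$, i.e.\ by verifying a polynomial identity in $a$ and $b$. You instead observe that the involution $\sigma:(x:y:z)\mapsto(y:x:z)$ preserves all the input data ($P_1\leftrightarrow P_2$, with $P_3,P_4,P_5,P_{15}$ fixed), hence by induction pairs up every derived point and line exactly as you list ($P_{18}\leftrightarrow P_{19}$, $P_{20}\leftrightarrow P_{21}$, so $\sigma(P_{18}P_{20})=P_{19}P_{21}$), while $P_3P_4=\{x=y\}$ is pointwise fixed, so $\sigma(P_{25})=P_{25}$ and the incidence $P_{25}\in P_{19}P_{21}$ drops out; I checked your pairings against the coordinates in the paper and they all hold. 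What your approach buys is a computation-free proof that simultaneously explains the two earlier Facts ($P_{10}\in P_1P_9$ and $P_{22}\in P_{12}P_{17}$) and makes transparent why these incidences impose no conditions on $a,b$, whereas the paper's implicit route is a routine but bulkier symbolic check. Two small points of care: the fixed locus of $\sigma$ is the line $\{x=y\}$ together with the isolated point $(1:-1:0)$, though all you need is that $P_3P_4$ is pointwise fixed; and the argument tacitly uses that the paired lines are honestly defined (e.g.\ $P_{19}\neq P_{21}$), which is covered by the genericity assumptions $a\notin\{0,1\}$, $b\notin\{0,1,a\}$ already in force in the paper.
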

   Finally we have to check under which conditions the following incidences are satisfied
 \begin{equation}\label{conditions}
   \begin{array}{cccc}
   P_{23}\in P_{18}P_{20}, & P_{24}\in P_{19}P_{21}, & P_{26}\in P_{14}P_{15}, & P_{27}\in P_{13}P_{15},\\
   P_{28}\in P_{18}P_{20}, & P_{29}\in P_{19}P_{21}, & P_{30}\in P_{18}P_{20}, & P_{31}\in P_{19}P_{21}.\\
   \end{array}
   \end{equation}
   Evaluating algebraic conditions we obtain polynomial equations involving $a$ and $b$ and
   the smallest ideal that contains all these polynomials is the ideal
   generated by the polynomial
   $$(a-1)^2(a^4b-a^2b^2-a^3+a^2b-ab^2+b^2).$$
   Since by assumption $a\neq 1$, it must be
\begin{equation}\label{podstawienie b}
   f:=a^4b-a^2b^2-a^3+a^2b-ab^2+b^2=0.
\end{equation}
\subsection{The parameter curve}
   The polynomial $f$ in \eqnref{podstawienie b} defines a singular curve in the plane $\R^2$ with coordinates $(a,b)$.
   We want to pass to its smooth model. To this end we substitute
   $$ b=\frac{(a-1)aT+a^2+a^4}{2(a^2+a-1)}$$
   into the equation (\ref{podstawienie b}) and we get
   $$ (a-1)^2a^2(-a^4-2a^3-5a^2+T^2-4a)=0.$$
   We can again localize at $a=1$, so that it is enough to study the curve
   $$ C:\;\; T^2=a (1 + a) (4 + a + a^2). $$
   Performing additional substitutions
   $$a=\frac1X, \; \; T=\frac{2Y+X+1}{X^2}$$
   we obtain a smooth elliptic curve $E$ in the canonical form
   $$ E: Y^2+XY+Y=X^3+X^2.$$
   This is the parameter space for $B15$ configurations and thus Theorem B is proved.

   It is known (see Cremona basis \cite{Cre}) that $E$ contains only
   $4$ rational points. Each of them corresponds to forbidden values of
   $a$ and $b$. Thus Corollary \ref{cor:corollary B} follows.
\paragraph*{\emph{Acknowledgement.}}
   We would like to thank Maciej Ulas for helping us with arithmetic questions.
   We thank also Tomasz Szemberg for valuable conversations.
%*****************************************************************************

%***************************************************************************** % Addresses

\bigskip \small

\bigskip
   Magdalena~Lampa-Baczy\'nska, Justyna Szpond,
   Institute of Mathematics
   Pedagogical University of Cracow,
   Podchor\c a\.zych 2,
   PL-30-084 Krak\'ow, Poland

\nopagebreak

  \textit{E-mail address:} \texttt{lampa.baczynska@wp.pl}

%   \textit{E-mail address:} \texttt{szemberg@up.krakow.pl}

   \textit{E-mail address:} \texttt{szpond@up.krakow.pl}

\bigskip

%   Justyna Szpond current address:
%   Albert-Ludwigs-Universit\"at Freiburg,
%   Mathematisches Institut, D-79104 Freiburg, Germany
%*****************************************************************************

\end{document}